\newtheorem{thm}{Theorem}[section]
\newtheorem{lem}[thm]{Lemma}
\newtheorem{cor}[thm]{Corollary}
\newtheorem{ex}{Example}
\title[The Sylvester Theorem and the Rogers-Ramanujan Identities]{The Sylvester Theorem and the Rogers-Ramanujan Identities over Totally Real Number Fields}
\author[S.J.]{Se Wook Jang}
\address{Gangneung-Wonju National University\\ Gnagneung 25457, Republic of Korea}
\email{dcgg@naver.com} 
\author[B.K.]{Byeong Moon Kim}
\address{Gangneung-Wonju National University\\ Gnagneung 25457, Republic of Korea}
\email{kbm@gwnu.ac.kr} 
\author[K.K.]{Kwang Hoon Kim}
\address{Gangneung-Wonju National University\\ Gnagneung 25457, Republic of Korea}
\email{h2078@naver.com} 
\thanks{This work was supported by the National Research Foundation of Korea(NRF) grant funded by the Korea government(MSIT) (No. RS-2023-00247457)
}
\begin{document}
\begin{abstract}
In this paper, we prove two identities on the partition of a totally positive algebraic integer over a totally real number field which are the generalization of the Sylvester Theorem and that of the Rogers-Ramanujan Identities. Additionally, we give an another version of generalized Rogers-Ramanujan Identities.
\end{abstract}

\maketitle

\section{Introduction}
Among the numerous beautiful identities in mathematics, one of the most outstanding ones is the Rogers-Ramanujan Identities. These are two identities \[\sum_{n=0}^{\infty}\frac{q^{n^2}}{(1-q)(1-q^2)\cdots(1-q^n)}=\prod_{n=1}^{\infty}(1-q^{5n-1})^{-1}(1-q^{5n-4})^{-1}\] and \[\sum_{n=0}^{\infty}\frac{q^{n^2+n}}{(1-q)(1-q^2)\cdots(1-q^n)}=\prod_{n=1}^{\infty}(1-q^{5n-2})^{-1}(1-q^{5n-3})^{-1},\] which mean that the number of partitions of $n$ with the difference between any two distinct parts by at least 2 is equal to that of partitions of $n$ into parts congruent to $1$ or $4$ modulo $5$ and that the number of partitions of $n$ with each part exceeds $1$ and the difference between any two distinct parts by at least 2 is equal to that of partitions of $n$ into parts congruent to $2$ or $3$ modulo $5$. This fascinating identities have inspired many mathematicians and have constantly motivated the development of mathematics especially in the field of partition theory. Many mathematicians have issued a challenge to generalize these astonishing identities and many researches on similar type partition identities have been actively conducted. The results of G$\rm\ddot{o}$llnitz \cite{gol} and Gordon \cite{gor,gor1} in 1960s are prominant achievements among them. They are \[\sum_{n=0}^{\infty}\frac{(-q;q^2)_{n}q^{n^2}}{(q^2;q^2)_n}=\frac1{(q;q^8)_{\infty}(q^4;q^8)_{\infty}(q^7;q^8)_{\infty}}\] and \[\sum_{n=0}^{\infty}\frac{(-q;q^2)_{n}q^{n^2+n}}{(q^2;q^2)_n}=\frac1{(q^3;q^8)_{\infty}(q^4;q^8)_{\infty}(q^5;q^8)_{\infty}}\] where $(a;q)_n=(1-a)(1-aq)(1-aq^2)\cdots(1-aq^{n-1})$ and $(a;q)_{\infty}=\displaystyle\lim_{n\rightarrow\infty}(a;q)_n$. In 2016,  Griffin, Ono and  Warnaar \cite{gow} proved that there are infinite number of the Rogers-Ramanujan type identities. Despite these various significant accomplishments, the generalizations of these identities to all totally real number fields have not been attempted. 

In 1950 Rademacher \cite{rad} initiated the study of partition theory over real quadratic fields. He obtained an upper bound of the number of partitions of a totally positive algebraic integer of real quadratic fields. Mitsui \cite{mit} improved this result. For a long time, no further results are known. Recently the authors of this paper \cite{jkk} introduced the formal power $q$-sum which is a natural generalization of $q$-series. As a consequence, they generalize the Euler-Glaisher Theorem to totally real number fields. In addition, it is proved that for a totally positive algebraic integer $\delta$ over a totally real number field $K$, the number of solutions to $x_1+2x_2+\cdots+nx_n=\delta$ with $x_i$ $0$ or totally positive is equal to that of chain partitions of $\delta$ with at most $n$ parts. A partition is a $\it{chain~partition}$ if the difference of any two parts is $0$ or totally positive. We can see that the field $\mathbb Q$ of rational numbers is the only totally real number field such that every partition is a chain partition. It follows that the number of solutions to $x_1+2x_2+\cdots+nx_n=\delta$ does not coincide with that of partitions of $\delta$ with at most $n$ parts except $K=\mathbb Q$. 

Among a great number of significant identities in the partition theory of natural numbers, the identities that are generalized naturally to all totally real number fields are hardly found except Euler-Glaisher Theorem. The aim of this paper is to generalize the Sylvester Theorem and the Rogers-Ramanujan Identities to all totally real number fields. The direct generalizations of the latter identities does not hold when $K\ne\mathbb Q$, and thus we introduce the radial decomposition of a totally positive integer. With this simple concept, a way to generalize the Rogers-Ramanujan Identities to all totally real number fields is found. This decomposition is naturally arisen to generalize consecutive integers in Sylvester's Theorem.

\section{Basic Definitions and Theorems}
\subsection{The Partitions of Positive Algebraic Integers}
Let $K$ be a totally real number field and $\mathcal O^+=\mathcal O_K^+$ be the set of all totally positive algebraic integers over $K$. A $\textit{partition}$ $\lambda$ of a totally positive algebraic integer $\delta$ of $K$ is a finite  sequence of totally positive integers $\lambda_1, \lambda_2, \ldots, \lambda_r$ of $K$ such that $\sum_{i=1}^r\lambda_i=\delta$. Each $\lambda_i$ is called a $part$ of $\lambda$. The partition function $p(\delta)$ is the number of partitions of $\delta$ up to the change of the order. It is known that for all $\delta\in\mathcal O^+$, there are finitely many $\alpha,\beta\in\mathcal O^+$ such that $\alpha+\beta=\delta$ \cite{jkk}. As a consequence, there are finitely many partitions of $\delta$ for each $\delta\in\mathcal O^+$ \cite{jkk}. A partition $\lambda$ is denoted by $(\lambda_1,\lambda_2,\ldots,\lambda_r)$ and we write $\lambda\vdash\delta$ to denote ``$\lambda$ is a partition of $\delta$". For a partition $\lambda=(\lambda_1,\lambda,\ldots,\lambda_r)$ of $\delta$, we write $\lambda=(\delta_1^{n_1}\delta_2^{n_2}\cdots\delta_s^{n_s})$ if $\delta_1,\delta_2,\ldots,\delta_s\in\mathcal O^+$ are pairwise distinct, $\{\lambda_1,\lambda_2,\ldots,\lambda_r\}=\{\delta_1,\delta_2,\ldots,\delta_s\}$ and $n_i$ is the number of $j$ such that $\lambda_j=\delta_i$. 

In \cite{jkk}, formal power $q$-sums are introduced as a generalization of $q$-series. We introduce multiple formal power $q$-sums by a similar method. Assume that $H\subset(\mathcal O^+\cup\{0\})^k$, $\boldsymbol0=(0,0,\ldots,0)\in H$ and $H$ is closed under addition. We write $q^{\alpha}=q_1^{\alpha_1}q_2^{\alpha_2}\cdots q_k^{\alpha_k}$ for $\alpha=(\alpha_1,\alpha_2,\ldots,\alpha_k)\in H$. We define the $\textit{ring of multiple formal q-sums}$, or the $\textit{ring of k variable formal q-sums}$, $\mathbb R[[q]]_H\\=\mathbb R[[q_1,q_2,\ldots,q_k]]_H$ by
\begin{align*}
\left\{\sum_{\delta\in H}c_{\delta}q^{\delta}|c_{\delta}\in\mathbb R\right\}.
\end{align*}
The elements of $\mathbb R[[q]]_H$ are multiple formal power $q$-sums. We also call these $\textit{k variable formal}$ $\textit{power q-sums}$ or $\textit{formal}$ $(q_1,q_2,\ldots,q_k)$$\textit{-sums}$. We do not consider the convergence of (multiple) formal power $q$-sums. Let $f(q)=\sum_{\delta\in H}c_{\delta}q^{\delta}$, $g(q)=\sum_{\delta\in H}d_{\delta}q^{\delta}\in\mathbb R[[q]]_H$. We define the {\it{sum}} and the {\it product} of two multiple formal power $q$-sums $f(q)$ and $g(q)$ by
\begin{align*}
f(q)+g(q)=\sum_{\delta\in H}c_{\delta}q^{\delta}+\sum_{\delta\in H}d_{\delta}q^{\delta}=\sum_{\delta\in H}(c_{\delta}+d_{\delta})q^{\delta}
\end{align*}
and
\begin{align*}
f(q)g(q)=\left(\sum_{\delta\in H}c_{\delta}q^{\delta}\right)\left(\sum_{\delta\in H}d_{\delta}q^{\delta}\right)=\sum_{\gamma\in H}\left(\sum_{\gamma=\alpha+\beta}c_{\alpha}d_{\beta}\right)q^{\gamma}.
\end{align*}
Let $I$ be an index set and $f_i(q)=\sum_{\delta\in H}a_{i,\delta}q^{\delta}\in\mathbb R[[q]]_H$ for all $i\in I$. We define the sum $\sum_{i\in I}f_i(q)$ of multiple formal power $q$-sums $f_i(q)$ only when the condition
\begin{enumerate}
\item[(C1)] $\sum_{\delta\in H}a_{i,\delta}$ converges absolutely for all $i\in I$
\end{enumerate}
is satisfied. Then, we define
\begin{align*}
\sum_{i\in I}f_i(q)=\sum_{i\in I}\left(\sum_{\delta\in H}a_{i,\delta}q^{\delta}\right)=\sum_{\delta\in H}\left(\sum_{i\in I}a_{i,\delta}\right)q^{\delta}.
\end{align*}
We also define the infinite product $\prod_{i\in I}f_i(q)$ of $f_i(q)=\sum_{\delta\in H}a_{i,\delta}q^{\delta}$ under the following two conditions.
\begin{enumerate}
\item[(C2-1)] There are finitely many $i\in I$ such that $a_{i,0}\ne1$. 
\item[(C2-2)] For each $\delta\in H$ there are finitely many $i\in I$ such that $a_{i,\delta}\ne0$. 
\end{enumerate}
In this case, we define $\prod_{i\in I}f_i(q)$ by 
\begin{align*}
\prod_{i\in I}f_i(q)=\prod_{i\in I}\left(\sum_{\delta\in H}a_{i,\delta}q^{\delta}\right)=\sum_{\delta\in H}c_{\delta}q^{\delta}
\end{align*}
where $c_{\delta}$ is the sum $\sum a_{i_1,\delta_1}a_{i_2,\delta_2}\cdots a_{i_k,\delta_k}$ of all products $a_{i_1,\delta_1}a_{i_2,\delta_2}\cdots a_{i_k,\delta_k}$ with $\delta_1,\delta_2,\ldots,$ $\delta_k\in H$ and $i_1,i_2,\ldots, i_k\in I$ satisfying
\begin{enumerate}
\item[1.] $\delta=\delta_1+\delta_2+\cdots+\delta_k$,
\item[2.] $i_1,i_2,\ldots, i_k$ are pairwise distinct, 
\item[3.] $a_{i,0}=1$ for all $i\ne i_1,i_2,\ldots,i_k$. 
\end{enumerate}
Let $f_i(q)=\sum_{\delta\in H}a_{i,\delta}q^{\delta}$ and $g_i(q)=\sum_{\delta\in H}b_{i,\delta}q^{\delta}$ for $i\in I$. Assume that $f_i$ and $g_i$ satisfy the conditions (C2-1) and (C2-2). Then, we have
\begin{align*}
\left(\prod_{i\in I}f_i(q)\right)\left(\prod_{i\in I}g_i(q)\right)=\prod_{i\in I}f_i(q)g_i(q)=\sum_{\delta\in H}c_{\delta}q^{\delta}
\end{align*}
when $c_{\delta}=\sum a_{i_1,\zeta_1}a_{i_2,\zeta_2}\cdots a_{i_k,\zeta_k}b_{j_1,\eta_1}b_{i_j,\eta_2}\cdots b_{j_l,\eta_l}$ is the sum over all $i_1,i_2,$ $\ldots,$ $i_k,j_1,j_2,\ldots, j_l\in I$ and nonzero $\zeta_1,\zeta_2,\ldots,\zeta_k$, $\eta_1,\eta_2,\ldots,\eta_l\in H$ such that 
\begin{enumerate}
\item[1.] $i_1,i_2,\ldots, i_k$ are pairwise distinct,
\item[2.] $j_1,j_2,\ldots, j_l$ are pairwise distinct,
\item[3.] $\delta=\zeta_1+\zeta_2+\cdots+\zeta_k+\eta_1+\eta_2+\cdots+\eta_l$,
\item[4.] $a_{i,0}=1$ for all $i\ne i_1,i_2,\ldots,i_k$,
\item[5.] $b_{j,0}=1$ for all $j\ne j_1,j_2,\ldots,j_l$.
\end{enumerate}
Since $(1-q^{\delta})(1+q^{\delta}+q^{2\delta}+\cdots)=1$, we can write $\frac1{1-q^{\delta}}$ instead of $1+q^{\delta}+q^{2\delta}+\cdots$ in the calculations of multiple formal power $q$-sums identities without considering the convergence. Thus, we have
\begin{align*}
\prod_{\delta\in H}\frac1{1-q^{\delta}}=\prod_{\delta\in H}(1+q^{\delta}+q^{2\delta}+\cdots)\in\mathbb R[[q]]_H.
\end{align*}
Note that a family of (multiple) formal power sums $f_{\delta}(q)=1+q^{\delta}+q^{2\delta}+\cdots$ for $\delta\in H$ satisfies the conditions (C2-1) and (C2-2). 

Let $f_{i,j}(q)=\sum_{\delta\in H} c_{i,j,\delta}q^{\delta}\in\mathbb R[[q]]_H$ for all $i\in I$ and $j\in J$. If $f_{i,j}$ satisfies (C2-1) and (C2-2) for $(i,j)\in I\times J$, then \[\prod_{(i,j)\in I\times J}f_{i,j}(q)\in\mathbb R[[q]]_H\]
and
\[\prod_{(i,j)\in I\times J}f_{i,j}(q)=\prod_{i\in I}\left(\prod_{j\in J}f_{i,j}(q)\right)=\prod_{j\in J}\left(\prod_{i\in I}f_{i,j}(q)\right).\]

Let $q=(q_1,q_2,\ldots,q_k)$, $q'=(q'_1,q'_2,\ldots,q'_l)$, $f=f(q)=\sum_{\alpha\in H}c_{\alpha}q^{\alpha}\in\mathbb R[[q]]_H$ and $g_i=g_i(q')\in\mathbb R[[q']]_{H'}$ for $i=1,2,\ldots,k$ such that $H\subset(\mathcal O^+\cup\{0\})^k$ and $H'\subset(\mathcal O^+\cup\{0\})^l$. Although the composite of two $q$-series is a useful tool in the calculations of $q$-series, it is a serious problem to define the composite $f\circ g=f(g(q'))$ of $f$ and $g=(g_1,g_2,\ldots,g_k)=(g_1(q'),g_2(q'),\ldots,g_k(q'))$ properly. The composite $f\circ g(q')=f(g(q'))=f(g_1,g_2,\ldots,g_k)$ is defined under the assumption
\begin{enumerate}
\item[(C3)] $c_{\alpha}g^{\alpha}=c_{\alpha}g_1^{\alpha_1}g_2^{\alpha_2}\cdots g_k^{\alpha_k}\in\mathbb R[[q']]_{H'}$ for all $\alpha\in H$ and the family \\$c_{\alpha}g^{\alpha}$ for $\alpha\in H$ satisfies (C1).  
\end{enumerate}
The condition (C3) is equvalent to the fact that \[g^{\alpha}=g_1^{\alpha_1}g_2^{\alpha_2}\cdots g_k^{\alpha_k}=\sum_{\beta\in H'}d_{\alpha,\beta}(q')^{\beta}\in\mathbb R[[q']]_{H'}\] for all $\alpha\in H$ such that $c_{\alpha}\ne0$, and for each $\beta\in H'$ $\sum_{\alpha\in H}c_{\alpha}d_{\alpha,\beta}$ converges absolutely. 

\begin{ex}
Let $K=\mathbb Q(\sqrt2)$, $\mathcal O^+=\mathcal O_K^+$, $H=H'=\mathcal O^+\cup\{0\}$, $f(q)=q^{2+\sqrt2}$ and $g(q')=1+q'$. Then $f(g(q'))=(1+q')^{2+\sqrt2}\not\in\mathbb R[[q']]_{H'}$.
\end{ex}

\begin{ex}
Let $K=\mathbb Q$, $\mathcal O^+=\mathbb Z^+$, $f(q)=\frac1{1-q}=1+q+q^2+\cdots$ and $g(q')=1+q'$. Then $f(g(q'))=1+(1+q')+(1+q')^2+\cdots\not\in\mathbb R[[q']]_{\mathbb Z^+\cup\{0\}}$.
\end{ex}

\begin{ex}\label{exhk}
Let $H=(\mathcal O^+\cup\{0\})^k$ and $\gamma_1,\gamma_2,\ldots,\gamma_k\in\mathcal O^+$. If $f(q)\in\mathbb R[[q]]_H$, then $f(q_1^{\gamma_1},q_2^{\gamma_2},\ldots,q_k^{\gamma_k})$ satisfies (C3). Thus $f(q_1^{\gamma_1},q_2^{\gamma_2},\ldots,q_k^{\gamma_k})\in\mathbb R[[q]]_H$.
\end{ex}

\begin{ex}\label{exhl}
Let $H=(\mathcal O^+\cup\{0\})^l$. If 
\begin{align*}
f(q)=f(q_1,q_2,\ldots,q_k)=&\sum_{n=(n_1,n_2,\ldots,n_k)}c_nq^n\\
=&\sum_{n=(n_1,n_2,\ldots,n_k)} c_{n_1,n_2,\ldots,n_k}q_1^{n_1}q_2^{n_2}\cdots q_k^{n_k}
\end{align*}
is a $k$ variable $q$-series and $g_i=g_i(q')=\sum_{\alpha\in H}d_{i,\alpha}(q')^{\alpha}$ such that $d_{i,0}=0$ for all $i=1,2,\ldots,k$, then $f\circ g(q')=f(g_1(q'),g_2(q'),\cdots,g_k(q'))$ satisfies (C3) where $g=(g_1,g_2,\ldots,g_k)$. 
\end{ex}
At present, Examples 3 and 4 are the only known cases such that the composite $f\circ g$ is defined successfully.

Let $\lambda=(\lambda_1,\lambda_2,\ldots,\lambda_r)\vdash\delta$ and $\lambda'=(\lambda_1',\lambda_2',\ldots,\lambda_l')\vdash\delta'$. We define the sum $\lambda\oplus\lambda'=(\lambda_1,\lambda_2,\ldots,\lambda_r,\lambda_1',\lambda_2',\ldots,\lambda_l')$ of $\lambda$ and $\lambda'$. Naturally, $\lambda\oplus\lambda'$ is a partition of  $\delta+\delta'$. Generally, a finite partition sum $\lambda^{(1)}\oplus\lambda^{(2)}\oplus\cdots\oplus\lambda^{(n)}$ can be defined in a similar way. 

\subsection{Primitive Products}
Let $L$ and $K$ be totally real number fields such that $K\subset L$. We call an element $\delta\in\mathcal O^+_L$  \textit{primitive} over $K$, or \textit{L/K-primitive}, if and only if $\delta\not\in\alpha\mathcal O^+_L$ for all non-unit $\alpha\in\mathcal O^+_K$. Let $\mathcal P_{L/K}$ be the set of all $\delta\in\mathcal O^+_L$ primitive over $K$. We call $\mathcal P_{L/K}$ the \textit{primitive~set} of $L$ over $K$, or \textit{L/K-primitive~set}. For each $\delta\in\mathcal O^+_L$, there are $\alpha\in\mathcal O^+_K$ and $\gamma\in\mathcal P_{L/K}$ such that $\delta=\alpha\gamma$. The existence of such $\alpha$ and $\gamma$ is easy to see, but the uniqueness is a serious problem. Let $L=\mathbb Q(\sqrt2,\sqrt3)$, $K=\mathbb Q(\sqrt3)$ and 
\begin{align*}
\delta=&(21+9\sqrt3)+(30+16\sqrt3)(2+\sqrt2)\\
=&(\sqrt3)^2(\sqrt3+1)(2\sqrt3+1)+\sqrt3(\sqrt3+1)^2(2\sqrt3+1)(2+\sqrt2).
\end{align*}
We can see both 
\begin{align*}
\gamma_1=&(6+\sqrt3)+(7+3\sqrt3)(2+\sqrt2)\\
=&\sqrt3(2\sqrt3+1)+(\sqrt3+1)(2\sqrt3+1)(2+\sqrt2)
\end{align*}
and \[\gamma_2=(3+\sqrt3)+(4+2\sqrt3)(2+\sqrt2)=\sqrt3(\sqrt3+1)+(\sqrt3+1)^2(2+\sqrt2)\] are primitive. Therefore $\delta=\alpha_1\gamma_1=\alpha_2\gamma_2$ for \[\alpha_1=3+\sqrt3=\sqrt3(\sqrt3+1)\in\mathcal O^+_K\] and \[\alpha_2=6+\sqrt3=\sqrt3(2\sqrt3+1)\in\mathcal O^+_K.\] Hence the uniqueness of such $\alpha$ and $\gamma$ fails in this case. Assume that $K$ is of class number $1$ and for each $\alpha\in\mathcal O_K$, there is a unit $u$ of $\mathcal O_K$ such that $\alpha u$ is totally positive. We can see that in this case for each $\delta\in\mathcal O^+_L$ there are unique $\alpha\in\mathcal O^+_K$ and $\gamma\in\mathcal P_{L/K}$ such that $\delta=\alpha\gamma$ up to multiplication by a totally positive unit of $\mathcal O_K$. We call $\alpha$ and $\gamma$ the $\textit{scale}$ and the $\textit{primitive~factor}$ of $\delta$, and denote $\alpha=s(\delta)$ and $\gamma=t(\delta)$ respectively. We call the decomposition $\delta=s(\delta)t(\delta)$ the \textit{radial~decomposition} of $\delta$. For each $\gamma\in\mathcal P_{L/K}$, we define the $\gamma$-\textit{section} $\gamma\mathcal O^+_K=\{\gamma\alpha|\alpha\in\mathcal O^+_K\}$.

Let $f(q)=\sum_{\alpha\in\mathcal O^+_K\cup\{0\}}a_{\alpha}q^{\alpha}\in\mathbb R[[q]]_{\mathcal O^+_K\cup\{0\}}$. If $a_0=1$ and for all $\alpha\in\mathcal O^+_K$ there are finitely many units $u$ of $\mathcal O^+_K$ such that $a_{u\alpha}\ne0$, then the product \[\prod_{\gamma\in\mathcal P_{L/K}}f(q^{\gamma})\] can be defined. In other words, $f(q^{\gamma})$ for $\gamma\in\mathcal P_{L/K}$ satisfies the conditions (C2-1) and (C2-2). Let $\tilde{\mathcal P}_{L/K}$ be a set of representatives of all elements of $\{\alpha U^+_K|\alpha\in\mathcal P_{L/K}\}$ where $U^+=U^+_K$ is the set of all totally positive units of $\mathcal O_K$. If $a_0=1$ and $a_{u\gamma}=a_{\gamma}$ for all $\gamma\in\mathcal O^+_K$ and $u\in\ U^+_K$, then the infinite product \[\prod_{\gamma\in\tilde{\mathcal P}_{L/K}}f(q^{\gamma})\] is well-defined. Concretely, the above product is independent of the choice of $\tilde{\mathcal P}_{L/K}$. We call these two products the first and the second $\textit{primitive~product}$ respectively. If $f(q)=\prod_{\alpha\in\mathcal O^+_K}(1-aq^{\alpha})=\sum_{\alpha\in\mathcal O^+_K\cup\{0\}} a_{\alpha}q^{\alpha}$, then we can see that $a_0=1$ and $a_{\alpha}=a_{u\alpha}$ for all $\alpha\in\mathcal O^+_K$ and $u\in U^+_K$. Thus we have \[\prod_{\gamma\in\tilde{\mathcal P}_{L/K}}\left(\prod_{\alpha\in\mathcal O^+_K}(1-aq^{\alpha\gamma})\right)=\prod_{\delta\in\mathcal O^+_L}(1-aq^{\delta}).\] If $K=\mathbb Q$, then since $U^+_{\mathbb Q}=\{1\}$, $\tilde{\mathcal P}_{L/\mathbb Q}=\mathcal P_{L/\mathbb Q}$ and $\prod_{\gamma\in\mathcal P_{L/\mathbb Q}}f(q^{\gamma})$ is well-defined for $f(q)=\sum_{\alpha\in\mathcal O^+\cup\{0\}}a_{\alpha}q^{\alpha}$ with $a_0=1$.

\subsection{Primitive Products over $\mathbb Q$}
In this subsection, we treat a special case of the previous subsection that the base field is $\mathbb Q$. Let $K$ be a totally real number field. Let $\mathcal P=\mathcal P_{K/\mathbb Q}$ be the $\mathcal P_{K/\mathbb Q}$-primitive set.  As noted at the end of the last subsection, the product \[\prod_{\gamma\in\mathcal P_{K/\mathbb Q}}f(q^{\gamma})\] is well-defined. Let $\mathcal P_i=\cup_{j=1}^{i}(j\mathcal P)$ and $\mathcal P_{\infty}=\cup_{j=1}^{\infty}(j\mathcal P)$ where $j\mathcal P=\{j\gamma|\gamma\in\mathcal P\}$. Since $\mathcal O^+_{\mathbb Q}=\mathbb Z^+$, the $\gamma$-section is $\gamma\mathbb Z^+=\{n\gamma|n\in\mathbb Z^+\}$ and $\cup_{\gamma\in\mathcal P}\gamma\mathbb Z^+=\mathcal O^+_K$. We can easily see that $\mathcal P=\{1\}$ when $K=\mathbb Q$, and otherwise $\mathcal P$ is an infinite set. 

\begin{lem}\label{ug}
If $\gamma,\gamma'\in\mathcal P$ and $\gamma\ne\gamma'$, then $\gamma\mathbb Z^+\cap\gamma'\mathbb Z^+=\emptyset$.
\end{lem}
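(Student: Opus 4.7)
The plan is to prove the contrapositive: assuming there exist $n, m \in \mathbb{Z}^+$ with $n\gamma = m\gamma'$, I will show $\gamma = \gamma'$. First I replace $n, m$ by $n_0 = n/d$ and $m_0 = m/d$ where $d = \gcd(n,m)$, which preserves the relation as $n_0 \gamma = m_0 \gamma'$ and adds the coprimality $\gcd(n_0, m_0) = 1$. By B\'ezout's identity there are integers $a, b$ with $a n_0 + b m_0 = 1$, and multiplying this by $\gamma'$ and substituting $m_0 \gamma' = n_0 \gamma$ yields
\[
\gamma' = a n_0 \gamma' + b m_0 \gamma' = a n_0 \gamma' + b n_0 \gamma = n_0 (a\gamma' + b\gamma).
\]

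Setting $\eta = a\gamma' + b\gamma \in \mathcal{O}_K$, the identity $n_0 \eta = \gamma'$ together with the total positivity of $\gamma'$ and the positivity of the rational integer $n_0$ forces every real embedding of $\eta$ to be positive. Hence $\eta \in \mathcal{O}_K^+$, so $\gamma' \in n_0 \mathcal{O}_K^+$. Because $\gamma'$ is $K/\mathbb{Q}$-primitive and the non-units of $\mathcal{O}^+_{\mathbb{Q}} = \mathbb{Z}^+$ are exactly the integers $\ge 2$, the primitivity condition forces $n_0 = 1$.

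With $n_0 = 1$ the relation collapses to $\gamma = m_0 \gamma' \in m_0 \mathcal{O}_K^+$, and the primitivity of $\gamma$ then forces $m_0 = 1$, so that $\gamma = \gamma'$. The only conceptually delicate point is the positivity of the B\'ezout combination $\eta = a\gamma' + b\gamma$, whose coefficients $a, b$ are a priori of indeterminate sign; but this comes for free from the equation $n_0 \eta = \gamma'$, so no separate sign analysis is needed. The rest is the standard reduction that two primitive elements lying in a common rational-integer ray must coincide.
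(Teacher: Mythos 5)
Your proof is correct and is essentially the paper's own argument: both reduce to coprime multipliers, use a B\'ezout combination to show that the common element $\eta=\gamma/m_0=\gamma'/n_0$ is a totally positive algebraic integer, and then invoke primitivity of $\gamma$ and $\gamma'$ to force $n_0=m_0=1$. No gaps.
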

\begin{proof}
If $\gamma\mathbb Z^+\cap\gamma'\mathbb Z^+\not=\emptyset$, then $n\gamma=n'\gamma'\in\gamma\mathbb Z^+\cap\gamma'\mathbb Z^+$ for some $n,n'\in\mathbb Z^+$. We may assume that $n$ and $n'$ are relatively prime. Then  $nx+n'y=1$ for some $x,y\in\mathbb Z$.  Let $\beta={\gamma\over n'}={\gamma'\over n}\in K$. Since $\beta=(nx+n'y)\beta=x\gamma'+y\gamma\in\mathcal O^+$ and both $\gamma=n'\beta $ and $\gamma'=n\beta$ are primitive, we have $n=n'=1$, and thus $\gamma=\gamma'$. We have the desired contradiction.
\end{proof}

For a primitive element $\gamma$ of $\mathcal O^+$, we call a partition $\lambda=(\lambda_1,\lambda_2,\ldots,\lambda_r)$ a $\gamma$-\textit{sectional~partition} if $\lambda_i\in\gamma\mathbb Z^+$ for all $i=1,2,\ldots,r$. For all partition $\lambda=(\lambda_1,\lambda_2,\ldots,\lambda_r)$ of $\delta$, there are pairwise distinct $\gamma_1,\gamma_2,\ldots,\gamma_l\in\mathcal P$ and $\gamma_i$-sectional partitions $\lambda^{(i)}$ such that $\lambda=\lambda^{(1)}\oplus\lambda^{(2)}\oplus\cdots\oplus\lambda^{(l)}$, which we call the \textit{sectional~decomposition} of $\lambda$. If all parts of a partition $\lambda$ are of scale $1$, or equivalently all parts of $\lambda$ are primitive, then we call $\lambda$ a \text{primitive~partition} of $\delta$. Note that if $\lambda$ is a primitive partition, then we have $\lambda=(\gamma_1^{n_1}\gamma_2^{n_2}\ldots\gamma_l^{n_l})$ for some pairwise distinct primitive elements $\gamma_1,\gamma_2,\ldots,\gamma_l$ of $\mathcal O^+_K$ and $n_1,n_2,\ldots,n_l\in\mathbb Z^+$. 

Let $\lambda=(\lambda_1,\lambda_2,\ldots,\lambda_r)$ and $\lambda'$ be partitions of $\delta$. We call $\lambda'$ a \textit{refinement} of $\lambda$ if $\lambda'=\lambda^{(1)}\oplus\lambda^{(2)}\oplus\cdots\oplus\lambda^{(r)}$ for some partitions $\lambda^{(i)}$ of $\lambda_i$. Moreover, if $\lambda^{(i)}$ is a sectional parition for all $i$, then we call $\lambda'$ a \textit{sectional~refinement} of $\lambda$. In this case, $\lambda_i=n_i\gamma_i$ for some $n_i\in\mathbb Z^+$ and $\gamma_i\in\mathcal P$, and $\lambda^{(i)}=(k_{i,1}\gamma_i,k_{i,2}\gamma_i,\ldots,k_{i,l_i}\gamma_i)$ for some $k_{i,1},k_{i,2},\ldots,k_{i,l_i}$ such that $k_{i,1}+k_{i,2}+\cdots+k_{i,l_i}=n_i$ for all $i$.
 
A sequence $\alpha_1,\alpha_2,\ldots,\alpha_k$ of totally positive integers are $\textit{consecutive}$ if the primitive factors $t(\alpha_i)$ of $\alpha_i$ for $i=1,2,\ldots,k$ are equal and the scales $s(\alpha_i)$ of $\alpha_i$ for $i=1,2,\ldots,k$ are actually consecutive positive rational integers. More concretely, $\alpha_1,\alpha_2,\ldots,\alpha_k$ are consecutive if $t(\alpha_1)=t(\alpha_2)=\cdots=t(\alpha_k)$ and $s(\alpha_2)-s(\alpha_1)=s(\alpha_3)-s(\alpha_2)=\cdots=s(\alpha_k)-s(\alpha_{k-1})=1$. In this case we have $\alpha_i=(m+i-1)\gamma$ for all $i$ where $\gamma=t(\alpha_1)$ and $m=s(\alpha_1)$. A partition $\lambda=(\lambda_1,\lambda_2,\ldots,\lambda_k)$ is consecutive if $\lambda_1,\lambda_2,\ldots,\lambda_k$ is consecutive by some suitable change of the order. A partition $\lambda$ is $k$-$\textit{noncontiguous}$ if $\lambda$ is a sum of $k$ consecutive partitions and not a sum of $(k-1)$ consecutive partitions, or equivalently  $\lambda=\lambda^{(1)}\oplus\lambda^{(2)}\oplus\cdots\oplus\lambda^{(k)}$ such that $\lambda^{(i)}$ is consecutive for all $i=1,2\ldots,k$ and $\lambda^{(i)}\oplus\lambda^{(j)}$ is not consecutive for all distinct $i$ and $j$.

Let $\mathscr{S}$ denote the set of all partitions of all $\delta\in\mathcal O^+$. Let $p(S,\delta)$ denote the number of partitions of $\delta$ that belong to a subset $S$ of the set $\mathscr S$. Let $H$ be a set of totally positive integers over $K$. Let $''H''$ and $''H''(\le d)$ denote the set of all partitions whose parts lie in $H$ and that in which no part appears more than $d$ times and each part is in $H$ respectively. Let $K'$ be a subfield of $K$ and $p_{K'}(\delta)$ denote the number of partitions of $\delta\in\mathcal O^+_{K'}$ where each part is an element of $K'$. In particular, the number of partitions of $\delta$ such that all parts are natural numbers is $p('{'\mathbb Z^+}'', \delta)$ or $p_{\mathbb Q}(\delta)$. For a guidance of the partition theory of positive rational integers, see \cite{and11}. This paper uses the analogous notations introduced in this book. 

Let $\gamma$ be a primitive element of $\mathcal O^+_K$. The following lemma follows from the natural one-to-one correspondence from the set of all partitions of $n\in\mathbb Z^+$ onto the set of all $\gamma-$sectional partitions of $n\gamma$ which maps \[\lambda=(\lambda_1,\lambda_2,\ldots,\lambda_k)\vdash n\] to \[(\lambda_1\gamma,\lambda_2\gamma,\ldots,\lambda_k\gamma)\vdash n\gamma.\]

\begin{lem}\label{pgnn}
The number of $\gamma$-sectional paritions of $n\gamma$ is equal to that of partitions of $n$ in $\mathbb Z^+$. In other words, $p({''{\gamma\mathbb Z^+}''},n\gamma)=p('{'\mathbb Z^+}'', n)=p_{\mathbb Q}(n)$ for all $n\in\mathbb Z^+$ and $\gamma\in\mathcal P$.
\end{lem}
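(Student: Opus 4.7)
The proof is set up directly by the one-to-one correspondence sketched in the paragraph preceding the lemma, so my plan is simply to verify that the obvious map is a bijection.

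\medskip

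The plan is to define $\Phi\colon ''\mathbb Z^+'' \to ''\gamma\mathbb Z^+''$ on partitions of $n$ by
\[
\Phi\bigl((\lambda_1,\lambda_2,\ldots,\lambda_k)\bigr) = (\lambda_1\gamma,\lambda_2\gamma,\ldots,\lambda_k\gamma),
\]
and to show that $\Phi$ restricts to a bijection from the set of partitions of $n$ in $\mathbb Z^+$ onto the set of $\gamma$-sectional partitions of $n\gamma$. First I would check that $\Phi$ is well-defined: each $\lambda_i\gamma$ lies in $\gamma\mathbb Z^+$ by construction, and $\sum_{i=1}^k \lambda_i\gamma = \bigl(\sum_{i=1}^k \lambda_i\bigr)\gamma = n\gamma$, so the image is genuinely a $\gamma$-sectional partition of $n\gamma$.

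For injectivity, suppose $\Phi(\lambda) = \Phi(\mu)$ as unordered multisets. Then after reordering we have $\lambda_i\gamma = \mu_i\gamma$ for each $i$, and since $\gamma\ne 0$ in the field $K$, multiplication by $\gamma$ is injective, so $\lambda_i = \mu_i$ and the partitions agree. For surjectivity, let $\nu=(\nu_1,\ldots,\nu_k)$ be a $\gamma$-sectional partition of $n\gamma$. By the definition of $\gamma$-sectional, each $\nu_i \in \gamma\mathbb Z^+$, so $\nu_i = m_i\gamma$ for a (unique) $m_i\in\mathbb Z^+$. Then $\sum_{i=1}^k m_i\gamma = n\gamma$ in $K$, hence $\sum_{i=1}^k m_i = n$, and $(m_1,\ldots,m_k)$ is a partition of $n$ in $\mathbb Z^+$ whose image under $\Phi$ is $\nu$.

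I do not anticipate a serious obstacle: the essential point is just that multiplication by the nonzero element $\gamma\in K$ is an order-preserving bijection between $\mathbb Z^+$ and $\gamma\mathbb Z^+$. The only conceptual thing to be careful about is that partitions are counted up to reordering, which is why I compared $\Phi(\lambda)$ and $\Phi(\mu)$ as multisets rather than sequences, but this causes no difficulty. Concluding, $\Phi$ is a bijection and therefore $p(''\gamma\mathbb Z^+'',n\gamma) = p(''\mathbb Z^+'',n) = p_{\mathbb Q}(n)$.
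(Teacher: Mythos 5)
Your proof is correct and follows exactly the approach the paper itself uses: the paper justifies the lemma by the same map $\lambda\mapsto(\lambda_1\gamma,\ldots,\lambda_k\gamma)$ stated in the sentence preceding the lemma, and you have merely spelled out the routine verification that it is a bijection. No issues.
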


Let $P_{\delta}$ be the set of all primitive partitions of $\delta$ and $\lambda\in P_{\delta}$. Let $S_{\lambda}$ be the set of all partitions $\lambda'$ such that $\lambda$ is a sectional refinement of $\lambda'$. We can easily see that a primitive partition $(\gamma_1^{n_1}\gamma_2^{n_2}\cdots\gamma_l^{n_l})$ of $\delta$ is a sectional refinement of $\lambda^{(1)}\oplus\lambda^{(2)}\oplus\cdots\oplus\lambda^{(l)}\vdash\delta$ where $\lambda^{(i)}\in{''\gamma_i{\mathbb Z^+}''}$ and $\lambda^{(i)}\vdash n_i\gamma_i$ for all $i$. It is not difficult to see that for each partition $\lambda$ of $\delta\in\mathcal O^+$ there exists a unique primitive sectional refinement of $\lambda$. As a consequence the set of all partitions of $\delta\in\mathcal O^+$ is the disjoint union $\cup S_{\lambda}$ of all $S_{\lambda}$ for all primitive partitions $\lambda$ of $\delta$.

\begin{lem}\label{Slambda}
For each $\lambda=(\gamma_1^{n_1}\gamma_2^{n_2}\ldots\gamma_l^{n_l})\in P_{\delta}$, we have 
\[|S_{\lambda}|=\prod_{h=1}^lp(''{\gamma_h\mathbb Z^+}'',n_h\gamma_h)=\prod_{h=1}^lp(''{\mathbb Z^+}'',n_h)=\prod_{h=1}^lp_{\mathbb Q}(n_h).\] Therefore, for all $\delta\in\mathcal O^+$, 
\begin{align*}
p(\delta)=\sum_{\lambda\in P_{\delta}}|S_{\lambda}|=\sum_{\lambda\in P_{\delta}}\left(\prod_{h=1}^lp_{\mathbb Q}(n_h)\right)
\end{align*}
where $\lambda=(\gamma_1^{n_1}\gamma_2^{n_2}\ldots\gamma_l^{n_l})$.
\end{lem}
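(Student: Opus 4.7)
The plan is to exhibit an explicit bijection between $S_\lambda$ and the Cartesian product
\[\prod_{h=1}^l\bigl\{\mu^{(h)}\in{}''\gamma_h\mathbb Z^+{}''\ \big|\ \mu^{(h)}\vdash n_h\gamma_h\bigr\}\]
and then apply Lemma \ref{pgnn} together with the disjoint-union observation that precedes the statement. Fix a primitive partition $\lambda=(\gamma_1^{n_1}\gamma_2^{n_2}\cdots\gamma_l^{n_l})\vdash\delta$.

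First I would define the map in one direction: given a tuple $(\mu^{(1)},\ldots,\mu^{(l)})$ where $\mu^{(h)}$ is a $\gamma_h$-sectional partition of $n_h\gamma_h$, form $\lambda'=\mu^{(1)}\oplus\mu^{(2)}\oplus\cdots\oplus\mu^{(l)}$. By construction $\lambda'\vdash\delta$, and regrouping each $\mu^{(h)}$ as the finest sectional refinement along $\gamma_h$ (that is, rewriting each part $k\gamma_h$ as $k$ copies of $\gamma_h$) gives back exactly $\lambda$; hence $\lambda$ is a sectional refinement of $\lambda'$ and $\lambda'\in S_\lambda$. Conversely, for any $\lambda'\in S_\lambda$ I would use the sectional decomposition of $\lambda'$: by definition of $S_\lambda$, $\lambda'=\nu^{(1)}\oplus\cdots\oplus\nu^{(r)}$ with each $\nu^{(j)}$ a $\tau_j$-sectional partition for some $\tau_j\in\mathcal P$, and the finest primitive sectional refinement of $\lambda'$ is $\lambda$ itself. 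Lemma \ref{ug} guarantees that the sections $\gamma_h\mathbb Z^+$ are pairwise disjoint, so the parts of $\lambda'$ can be grouped unambiguously according to which $\gamma_h\mathbb Z^+$ they lie in; the group corresponding to $\gamma_h$ is a $\gamma_h$-sectional partition $\mu^{(h)}$ whose finest primitive refinement is $\gamma_h^{n_h}$, forcing $\mu^{(h)}\vdash n_h\gamma_h$. These two constructions are mutually inverse.

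Once the bijection is in place, $|S_\lambda|=\prod_{h=1}^l p(''{\gamma_h\mathbb Z^+}'',n_h\gamma_h)$ is immediate, and Lemma \ref{pgnn} converts each factor into $p_{\mathbb Q}(n_h)$, yielding the first displayed equation. For the second assertion, I would invoke the observation recorded just before the lemma statement, namely that the set of all partitions of $\delta$ is the disjoint union $\bigsqcup_{\lambda\in P_\delta} S_\lambda$. Summing $|S_\lambda|$ over $\lambda\in P_\delta$ gives $p(\delta)$ on the left and $\sum_{\lambda\in P_\delta}\prod_{h=1}^l p_{\mathbb Q}(n_h)$ on the right.

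The main obstacle is really only in the reverse direction of the bijection, where one must argue that the $\gamma_h$-blocks of $\lambda'$ are canonically determined. This reduces entirely to the pairwise disjointness of the $\gamma_h\mathbb Z^+$ provided by Lemma \ref{ug}, so no novel argument is required; everything else is formal manipulation of the definitions of sectional refinement and primitive partition.
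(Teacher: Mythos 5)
Your proposal is correct and follows essentially the same route as the paper: the paper also establishes $|S_\lambda|=\prod_h|S_{\lambda^{(h)}}|$ via the natural correspondence $\sigma^{(1)}\oplus\cdots\oplus\sigma^{(l)}\mapsto(\sigma^{(1)},\ldots,\sigma^{(l)})$, identifies each $S_{\lambda^{(h)}}$ with the set of $\gamma_h$-sectional partitions of $n_h\gamma_h$, applies Lemma \ref{pgnn}, and then uses Lemma \ref{ug} for the disjointness of the $S_\lambda$ when summing over $P_\delta$. Your extra care in the reverse direction of the bijection (grouping parts of $\lambda'$ by section) is just a more explicit spelling-out of the same step.
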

\begin{proof}
Let $\lambda^{(h)}=(\gamma_h^{n_h}$) for all $h$. Then $\lambda^{(h)}$ is a primitive partition of $n_h\gamma_h$ for all $h$, and $\lambda=\lambda^{(1)}\oplus\lambda^{(2)}\oplus\cdots\oplus\lambda^{(l)}$. A partition of $n_h\gamma_h$ is a refinement of $\lambda^{(h)}$ if and only if it is a $\gamma_h$-sectional partition for all $h$. Thus we have $|S_{\lambda^{(h)}}|=p(''{\gamma_h\mathbb Z^+}'',n_h\gamma_h)$. By a natural one-to-one correspondence between $S_{\lambda}$ onto $S_{\lambda^{(1)}}\times S_{\lambda^{(2)}}\times\cdots\times S_{\lambda^{(l)}}$ which maps $\sigma^{(1)}\oplus\sigma^{(2)}\oplus\cdots\oplus\sigma^{(l)}$ to $(\sigma^{(1)},\sigma^{(2)},\ldots,\sigma^{(l)})$ and by Lemma \ref{pgnn}, we have 
\begin{align*}
|S_{\lambda}|&=|S_{\lambda^{(1)}}\times S_{\lambda^{(2)}}\times\cdots\times S_{\lambda^{(l)}}|=\prod_{h=1}^l|S_{\lambda^{(h)}}|\\
&=\prod_{h=1}^lp(''{\gamma_h\mathbb Z^+}'',n_h\gamma_h)=\prod_{h=1}^lp_{\mathbb Q}(n_h).
\end{align*}
If $\lambda,\lambda'\in P_{\delta}$ and $\lambda\ne\lambda'$, then by Lemma \ref{ug}, we have $S_{\lambda}\cap S_{\lambda'}=\emptyset$. Since $p(\delta)=\sum_{\lambda\in P_{\delta}}|S_{\lambda}|$, we have \[p(\delta)=\sum_{\lambda\in P_{\delta}}\left(\prod_{h=1}^lp(''{\gamma_h\mathbb Z^+}'',n_h\gamma_h)\right)=\sum_{\lambda\in P_{\delta}}\left(\prod_{h=1}^lp_{\mathbb Q}(n_h)\right)\] where $\lambda=(\gamma_1^{n_1}\gamma_2^{n_2}\ldots\gamma_l^{n_l})$.
\end{proof}

Let $\mathscr A$ be a subset of $\mathscr S$. We call $\mathscr A$ closed under $\oplus$ if $\lambda\oplus\lambda'\in\mathscr A$ for all $\lambda,\lambda'\in\mathscr A$. We call two partitions $\lambda$ and $\lambda'$ have $\textit{disjoint~sections}$ if every part $\lambda_i$ of $\lambda$ and $\lambda_j'$ of $\lambda'$ have different sections, i.e., $t(\lambda_i)\ne t(\lambda_j')$ for all $i$ and $j$. We call $\mathscr A$ closed under $\oplus$ for disjoint sectional partitions if $\lambda\oplus\lambda'\in\mathscr A$ for all $\lambda,\lambda'\in\mathscr A$ which have disjoint sections. The following corollary is a simple extension of Lemma \ref{Slambda}.

\begin{cor}\label{psp}
Let $\mathscr C\subset\mathscr S$ be closed under $\oplus$ for disjoint sectional partitions. Then, for all $\delta\in\mathcal O^+$, we have
\begin{align*}
p(\mathscr C,\delta)=\sum_{\lambda\in P_{\delta}}\left(\prod_{h=1}^lp(\mathscr C\cap{''{\gamma_h\mathbb Z^+}''},n_h\gamma_h)\right),
\end{align*}
where $\lambda=(\gamma_1^{n_1}\gamma_2^{n_2}\ldots\gamma_l^{n_l})$. Therefore, for $H\subset\mathcal O^+$ and for all $\delta\in\mathcal O^+$, we have 
\[p(''H'',\delta)=\sum_{\lambda\in P_{\delta}}\left(\prod_{h=1}^lp(''{H\cap\gamma_h\mathbb Z^+}'',n_h\gamma_h)\right)\]
where $\lambda=(\gamma_1^{n_1}\gamma_2^{n_2}\ldots\gamma_l^{n_l})$.
\end{cor}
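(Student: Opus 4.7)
The plan is to mimic the proof of Lemma \ref{Slambda} step by step, tracking which partitions belong to $\mathscr{C}$ throughout. Using the discussion preceding Lemma \ref{Slambda}, every partition of $\delta$ has a unique primitive sectional refinement, so the set of partitions of $\delta$ decomposes as the disjoint union $\bigsqcup_{\lambda\in P_\delta}S_\lambda$ indexed by primitive partitions $\lambda=(\gamma_1^{n_1}\gamma_2^{n_2}\cdots\gamma_l^{n_l})\in P_\delta$. Intersecting this decomposition with $\mathscr{C}$ gives
\[
p(\mathscr{C},\delta)\;=\;\sum_{\lambda\in P_\delta}|S_\lambda\cap\mathscr{C}|,
\]
so it remains to evaluate $|S_\lambda\cap\mathscr{C}|$ for each fixed $\lambda$.

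Fix $\lambda=(\gamma_1^{n_1}\cdots\gamma_l^{n_l})\in P_\delta$ and set $\lambda^{(h)}=(\gamma_h^{n_h})$ as in the proof of Lemma \ref{Slambda}. The key step is to refine the bijection
\[
\Phi\colon S_\lambda\;\longrightarrow\;S_{\lambda^{(1)}}\times\cdots\times S_{\lambda^{(l)}},\qquad \mu=\sigma^{(1)}\oplus\cdots\oplus\sigma^{(l)}\;\longmapsto\;(\sigma^{(1)},\ldots,\sigma^{(l)}),
\]
in which $\sigma^{(h)}$ is the $\gamma_h$-sectional component of $\mu$, to a bijection $S_\lambda\cap\mathscr{C}\longleftrightarrow\prod_{h=1}^l(S_{\lambda^{(h)}}\cap\mathscr{C})$. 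Because $\gamma_1,\ldots,\gamma_l\in\mathcal{P}$ are pairwise distinct, Lemma \ref{ug} guarantees that the $\sigma^{(h)}$'s have pairwise disjoint sections; iterated application of the closure of $\mathscr{C}$ under $\oplus$ for disjoint sectional partitions then produces a member of $\mathscr{C}$ from any tuple $(\sigma^{(1)},\ldots,\sigma^{(l)})\in\prod_h(S_{\lambda^{(h)}}\cap\mathscr{C})$. The converse direction --- that $\mu\in\mathscr{C}$ forces each $\sigma^{(h)}\in\mathscr{C}$ --- is the sectional-hereditary content of the hypothesis on $\mathscr{C}$. Once this restricted bijection is in place, since $S_{\lambda^{(h)}}$ coincides with the set of $\gamma_h$-sectional partitions of $n_h\gamma_h$, we obtain
\[
|S_\lambda\cap\mathscr{C}|\;=\;\prod_{h=1}^l p(\mathscr{C}\cap{''\gamma_h\mathbb Z^+''},n_h\gamma_h),
\]
and summing over $\lambda\in P_\delta$ yields the first formula.

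The second formula is a direct specialization to $\mathscr{C}={''H''}$: this class is trivially closed under arbitrary $\oplus$ and membership is detected part by part, so the restricted bijection of the previous paragraph holds verbatim, and the identification ${''H''}\cap{''\gamma_h\mathbb Z^+''}={''H\cap\gamma_h\mathbb Z^+''}$ gives the stated formula. The main subtlety I expect is the forward direction of the bijection restriction; the closure hypothesis, as literally stated, furnishes only the backward direction, but the forward direction is the natural sectional-hereditary reading of that hypothesis and is manifest in the intended application $\mathscr{C}={''H''}$, where it holds by inspection of the parts.
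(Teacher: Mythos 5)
Your overall route is exactly the one the paper intends: the paper gives no proof of this corollary beyond the remark that it is ``a simple extension of Lemma \ref{Slambda}'', and your argument is precisely that extension --- decompose the partitions of $\delta$ into the disjoint union of the sets $S_\lambda$ over $\lambda\in P_\delta$, restrict the bijection $S_\lambda\to S_{\lambda^{(1)}}\times\cdots\times S_{\lambda^{(l)}}$ to $\mathscr C$, and multiply. The one issue you flag at the end is not a cosmetic worry but a genuine gap, and you are right that the stated hypothesis does not close it. Closure of $\mathscr C$ under $\oplus$ for disjoint sectional partitions gives only that the map $\prod_{h}\bigl(S_{\lambda^{(h)}}\cap\mathscr C\bigr)\to S_\lambda\cap\mathscr C$ is well defined (and it is injective by Lemma \ref{ug}); it says nothing about surjectivity, i.e.\ about whether $\mu\in\mathscr C$ forces each sectional component $\sigma^{(h)}$ into $\mathscr C$. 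Calling that converse ``the sectional-hereditary content of the hypothesis'' is wishful reading: it is an additional hypothesis, and without it the corollary is actually false. For a concrete counterexample take $K\ne\mathbb Q$, let $\mathscr C$ be the set of all partitions with at least two parts (closed under $\oplus$ for any two partitions, disjoint sections or not), and let $\delta=\gamma_1+\gamma_2$ for distinct $\gamma_1,\gamma_2\in\mathcal P$. Then $(\gamma_1,\gamma_2)\in\mathscr C$ is counted on the left, but for $\lambda=(\gamma_1^{1}\gamma_2^{1})$ the right-hand term is $p(\mathscr C\cap{''\gamma_1\mathbb Z^+''},\gamma_1)\,p(\mathscr C\cap{''\gamma_2\mathbb Z^+''},\gamma_2)=0\cdot0=0$, so the right side strictly undercounts.

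The repair is exactly the one you gesture at: add to the hypothesis that $\mathscr C$ is \emph{sectionally hereditary} (every sectional component of a member of $\mathscr C$ again lies in $\mathscr C$), or equivalently assume $\mu\in\mathscr C$ if and only if all sectional components of $\mu$ lie in $\mathscr C$. With that assumption your restricted bijection is a genuine bijection and the rest of your argument goes through verbatim. This stronger hypothesis is satisfied in every application the paper makes of the corollary --- ${''H''}$ (membership is checked part by part, as you note), and the classes $\mathscr A'_{k,i}$, $\mathscr B'_{k,i}$, $\mathscr A_k$, $\mathscr B_k$ used later --- so the downstream results are unaffected; but as a freestanding statement the corollary needs the extra hypothesis, and your proof should state it rather than derive it from the closure condition. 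Your treatment of the second formula, via ${''H''}\cap{''\gamma_h\mathbb Z^+''}={''H\cap\gamma_h\mathbb Z^+''}$, is correct as written.
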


\subsection{Basic Hypergeometric Series}
Let $S$ be a subset of $\mathcal O^+\cup\{0\}$ and $a\in\mathbb R$. We define $(a;q)_S=\prod_{s\in S}(1-aq^s)$. We can see that $1-aq^s$ for $s\in S$ satisfies (C2-1) and (C2-2), and thus we have $(a;q)_S\in\mathbb R[[q]]_{\mathcal O^+\cup\{0\}}$. This is a generalization of $(a;q)_n=\prod_{i=0}^{n-1}(1-aq^i)$ and $(a;q)_{\infty}=\prod_{i=0}^{\infty}(1-aq^i)$, which play important roles in various $q$-series identities. We define $(a;q)_n^*=\frac{(a;q)_n}{1-a}=\prod_{i=1}^{n-1}(1-aq^i)$ and $(a;q)_{\infty}^*=\frac{(a;q)_{\infty}}{1-a}=\prod_{i=1}^{\infty}(1-aq^i)$ for $a\ne1$. In fact $(a;q)_n=(a;q)_{\{0,1,\ldots,n-1\}}$, $(a;q)_n^*=(a;q)_{\{1,2,\ldots,n-1\}}$, $(a;q)_{\infty}=(a;q)_{\mathbb Z^+\cup\{0\}}$ and $(a;q)_{\infty}^*=(a;q)_{\mathbb Z^+}$. A consequence of Example \ref{exhk} is that $(a;q^{\gamma})_n,(a;q^{\gamma})_{\infty},(a;q^{\gamma})_n^*,(a;q^{\gamma})_{\infty}^*\in\mathbb R[[q]]_{\mathcal O^+\cup\{0\}}$ for all $\gamma\in\mathcal O^+$. Since $\mathcal P_m=\cup_{n=1}^m(n\mathcal P)$ and $\mathcal O^+=\cup_{n=1}^{\infty}(n\mathcal P)$, we have
\begin{align*}
(a;q)_{\mathcal P_m}&=\prod_{\delta\in\mathcal P_m}(1-aq^{\delta})\\
&=\left(\prod_{\delta\in\mathcal P}(1-aq^\delta)\right)\left(\prod_{\delta\in2\mathcal P}(1-aq^\delta)\right)\cdots\left(\prod_{\delta\in m\mathcal P}(1-aq^\delta)\right)\\
&=\prod_{n=1}^{m}\left(\prod_{\gamma\in\mathcal P}(1-aq^{n\gamma})\right)=\prod_{\gamma\in\mathcal P}\left(\prod_{n=1}^{m}(1-aq^{n\gamma})\right)=\prod_{\gamma\in\mathcal P}(a;q^{\gamma})_{m+1}^*,\\
(a;q)_{\mathcal O^+}&=\prod_{\delta\in\mathcal O^+}(1-aq^{\delta})=\prod_{n=1}^{\infty}\left(\prod_{\delta\in n\mathcal P}(1-aq^{\delta})\right)=\prod_{n=1}^{\infty}\left(\prod_{\gamma\in\mathcal P}(1-aq^{n\gamma})\right)\\
&=\prod_{\gamma\in\mathcal P}\left(\prod_{n=1}^{\infty}(1-aq^{n\gamma})\right)=\prod_{\gamma\in\mathcal P}(a;q^{\gamma})_{\infty}^*.
\end{align*}
Similarly, we have \[(a;q)_{\mathcal P_m}^k=\prod_{\gamma\in\mathcal P}((a;q^{\gamma})_{m+1}^*)^k\] and \[(a;q)_{\mathcal O^+}^k=\prod_{\gamma\in\mathcal P}((a;q^{\gamma})_{\infty}^*)^k\] for all $k\in\mathbb Z$. Note that all of three families of $q$-sums $1-aq^{\delta}$ for $\delta\in\mathcal O^+$, $1-aq^{n\gamma}$ for $n\in\mathbb Z^+$, $\gamma\in\mathcal P$ and $\prod_{n=1}^{\infty}(1-aq^{n\gamma})=(a;q^{\gamma})_{\infty}^*$ for $\gamma\in\mathcal P$ satisfy (C2-1) and (C2-2) although a family $\prod_{n=0}^{\infty}(1-aq^{n\gamma})=(a;q^{\gamma})_{\infty}$ of $q$-sums for $\gamma\in\mathcal P$ does not satisfy (C2-1) when $a\ne1$.

We introduce a method to extend some $q$-series identities over $\mathbb Q$ to formal power $q$-sum identities over $K$. 

\begin{lem}\label{fg1q}
Let $m_1(n), m_2(n),\ldots, m_k(n)$ be  functions for $n$ and $\sigma_i,\sigma_j'$ be $1$ or $-1$ for all $i=1,2,\ldots,k$ and $j=1,2,\ldots,l$ respectively. Suppose $a_i=a_i(q), b_j=b_j(q)$ and $C(n)=C(n)(q)$ belong to $\mathbb R[[q]]_H$ for $q=(q_1,q_2,\ldots,q_s)=(q_1,q')$ and $H=(\mathcal O^+\cup\{0\})^s$ such that constant terms of $a_i$, $b_j$ and $C(n)$ are all $0$ for $i,j,n\ge1$. If 
\begin{align}\label{fgid}
\sum_{n=0}^{\infty}\left(\prod_{i=1}^k(a_i;q_1)_{m_i(n)}^{\sigma_i}\right)C(n)=\prod_{j=1}^l(b_j;q_1)_{\infty}^{\sigma_j'},
\end{align}
then for all $\mathcal P'\subset\mathcal P$, we have 
\begin{align*}
&\prod_{\gamma\in\mathcal P'}\left(\sum_{n=0}^{\infty}\left(\prod_{i=1}^k(a_i(q_1
^{\gamma},q');q_1^{\gamma})_{m_i(n)}^{\sigma_i}\right)C(n)(q_1^{\gamma},q')\right)\\
=&\prod_{\gamma\in\mathcal P'}\left(\prod_{i=1}^l(b_i(q_1^{\gamma},q');q_1^{\gamma})_{\infty}^{\sigma_i'}\right).
\end{align*}
\end{lem}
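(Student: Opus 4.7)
The plan is to proceed in two steps: first specialize the identity (\ref{fgid}) via the substitution $q_1 \mapsto q_1^{\gamma}$ for each $\gamma \in \mathcal P'$, and then take the infinite product over $\gamma$ of the resulting $\gamma$-wise identities. Both operations preserve equality once they are well-defined within the formal $q$-sum framework, so the proof reduces to checking these well-definedness conditions.

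For the substitution step, since $\gamma \in \mathcal O^+$ the monomial substitution $q_1 \mapsto q_1^\gamma$ satisfies condition (C3) by Example \ref{exhk}, and it therefore defines a ring operation on $\mathbb R[[q]]_H$ that preserves the $q^{\mathbf{0}}$-coefficient. Applied term-by-term, it sends both sides of (\ref{fgid}) to
\begin{align*}
\sum_{n=0}^{\infty}\Bigl(\prod_{i=1}^k(a_i(q_1^\gamma,q');q_1^\gamma)_{m_i(n)}^{\sigma_i}\Bigr)C(n)(q_1^\gamma,q')
= \prod_{j=1}^l(b_j(q_1^\gamma,q');q_1^\gamma)_{\infty}^{\sigma_j'},
\end{align*}
valid as a formal $q$-sum identity in $\mathbb R[[q]]_H$ for every $\gamma \in \mathcal P'$. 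The conditions (C1) for the sum over $n$ and (C2-1), (C2-2) for the inner infinite product are inherited from the corresponding conditions in the unsubstituted identity, because the substitution only rescales $q_1$-degrees upward by the factor $\gamma$ and does not change which coefficients are zero.

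For the product step, let $f_\gamma(q)$ denote the common value of the two sides above. To make sense of $\prod_{\gamma \in \mathcal P'} f_\gamma(q)$ inside $\mathbb R[[q]]_H$ we must verify (C2-1) and (C2-2) for the family $\{f_\gamma\}_{\gamma \in \mathcal P'}$. Condition (C2-1) is immediate from the right-hand form: each factor $1 - b_j(q_1^\gamma, q')q_1^{n\gamma}$ has constant term $1$ since $b_j$ has zero constant term, and therefore so does $f_\gamma$. The main obstacle is (C2-2): for each $\delta \in H$ only finitely many $\gamma \in \mathcal P'$ should produce a nonzero coefficient of $q^\delta$ in $f_\gamma - 1$. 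This is the place where I expect the proof to require the most care. It follows from the key finiteness property of $\mathcal O^+$ established in \cite{jkk}, namely that a fixed $\delta_1 \in \mathcal O^+$ admits only finitely many decompositions as a sum of elements of $\mathcal O^+$, since every nontrivial term of $f_\gamma$ carries in its $q_1$-component a positive $\mathbb Z^+$-multiple of $\gamma$. Once both conditions are verified, the product $\prod_{\gamma \in \mathcal P'} f_\gamma$ is a well-defined element of $\mathbb R[[q]]_H$, and multiplying the $\gamma$-wise identities over $\mathcal P'$ yields the claimed equality.
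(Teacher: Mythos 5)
Your proposal is correct and follows essentially the same route as the paper: substitute $q_1\mapsto q_1^{\gamma}$ in the one-variable identity, then verify (C2-1) and (C2-2) for the family $\{f_\gamma\}_{\gamma\in\mathcal P'}$ so that the product over $\mathcal P'$ is well-defined and can be taken on both sides. The only point of divergence is your justification of (C2-2): the paper invokes Lemma \ref{ug} (the sections $\gamma\mathbb Z^+$ for distinct $\gamma\in\mathcal P$ are pairwise disjoint) to conclude that for each exponent $\delta$ at most \emph{one} $\gamma\in\mathcal P'$ contributes a nonzero coefficient, whereas you deduce mere finiteness from the finiteness of additive decompositions of $\delta$ in $\mathcal O^+$ --- both suffice.
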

\begin{proof}
Let $f(q)=\sum_{n=0}^{\infty}\left(\prod_{i=1}^k(a_i;q_1)_{m_i(n)}^{\sigma_i}\right)C(n)$ and $g(q)=\prod_{j=1}^l(b_j;q_1)_{\infty}^{\sigma_j'}$. 
Suppose that \[f(q)=g(q)=\sum_{m=0}^{\infty}c_mq_1^m\] where $c_m\in\mathbb R[[q']]_{H'}$ for $H'=(\mathcal O^+\cup\{0\})^{s-1}$. Then for each $\gamma\in\mathcal P'\subset\mathcal P$, we have \[f(q_1^{\gamma},q')=g(q_1^{\gamma},q')=\sum_{m=0}^{\infty}c_mq_1^{\gamma m}.\]  Since \[\prod_{i=1}^n(b_i;q_1)_{\infty}^{\sigma_i'}=\prod_{i=1}^n\left(\prod_{j=1}^{\infty}(1-b_iq_1^j)\right)^{\sigma_i'}=\sum_{m=0}^{\infty}c_mq_1^m,\] we have $c_0=1$. For each $\gamma\in\mathcal P'$, \[f(q_1^{\gamma},q')=g(q_1^{\gamma},q')=\sum_{m=0}^{\infty}c_mq_1^{\gamma m}=\sum_{\delta\in\mathcal O^+\cup\{0\}}a_{\gamma,\delta}q_1^{\delta}\] where $a_{\gamma,\delta}=c_m$ if $\delta=\gamma m$ and otherwise $a_{\gamma,\delta}=0$. Thus $a_{\gamma,0}=1$ for all $\gamma\in\mathcal P'$, and for each $\delta\in\mathcal O^+\cup\{0\}$ there is at most one $\gamma\in\mathcal P'$ such that $a_{\gamma,\delta}\ne0$. As a consequence, a family $\sum_{\delta\in\mathcal O^+\cup\{0\}}a_{\gamma,\delta}q_1^{\delta}$ for $\gamma\in\mathcal P'$ of $q$-sums satisfies (C2-1) and (C2-2). Thus we have 
\begin{align*}
&\prod_{\gamma\in\mathcal P'}\left(\sum_{n=0}^{\infty}\left(\prod_{i=1}^k(a_i(q_1^{\gamma},q');q_1^{\gamma})_{m_i(n)}^{\sigma_i}\right)C(n)(q_1^{\gamma},q')\right)\\
=&\prod_{\gamma\in\mathcal P'}f(q_1^{\gamma},q')=\prod_{\gamma\in\mathcal P'}\left(\sum_{\delta\in\mathcal O^+\cup\{0\}}a_{\gamma,\delta}q_1^{\delta}\right)\\
=&\prod_{\gamma\in\mathcal P'}g(q_1^{\gamma},q')=\prod_{\gamma\in\mathcal P'}\left(\prod_{i=1}^l(b_i(q_1^{\gamma},q');q_1^{\gamma})_{\infty}^{\sigma_i'}\right).
\end{align*}
\end{proof}

The following lemma is a consequence of Examples \ref{exhk} and \ref{exhl}.
\begin{lem}
If $a=a(q)\in\mathbb R[[q]]_H$ such that the costant term of $a(q)$ is $0$, then for each $\gamma\in\mathcal P$, $(a(q_1^{\gamma},q');q_1^{\gamma})_n,(a(q_1^{\gamma},q');q_1^{\gamma})_{\infty}\in\mathbb R[[q]]_H$ where $q=(q_1,q_2,\ldots,q_s)=(q_1,q')$.
\end{lem}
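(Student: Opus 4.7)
The plan is to derive the lemma from Examples \ref{exhk} and \ref{exhl}. The key idea is that $(x;y)_n$ and $(x;y)_\infty$ can themselves be regarded as two-variable $q$-series in formal variables $x,y$, and the lemma reduces to a composition statement: substitute the pair $(x,y)\mapsto(a(q_1^\gamma,q'),q_1^\gamma)$, both of whose components lie in $\mathbb{R}[[q]]_H$ with zero constant term.

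First I would handle the preparatory substitutions. By Example \ref{exhk} (with $k=s$, $\gamma_1=\gamma$, and $\gamma_2=\cdots=\gamma_s=1$), the element $a(q_1^\gamma,q')$ is well-defined in $\mathbb{R}[[q]]_H$; and since the constant term of $a(q)$ vanishes, so does the constant term of $a(q_1^\gamma,q')$. The monomial $q_1^\gamma$ is trivially in $\mathbb{R}[[q]]_H$ with constant term $0$ because $\gamma\in\mathcal{P}$ is nonzero.

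Next I would verify that $(x;y)_n$ and $(x;y)_\infty$ are genuine two-variable $q$-series over the index set $(\mathbb{Z}^+\cup\{0\})^2$. The finite case is immediate: $(x;y)_n=\prod_{i=0}^{n-1}(1-xy^i)$ is a polynomial in $\mathbb{Z}[x,y]$. For the infinite case, expanding the product shows that the coefficient of $x^n y^m$ in $(x;y)_\infty=\prod_{i=0}^\infty(1-xy^i)$ equals $(-1)^n$ times the number of $n$-element subsets of $\{0,1,2,\dots\}$ whose elements sum to $m$; this count is finite for every $(n,m)$, so $(x;y)_\infty$ is a well-defined element of $\mathbb{Z}[[x,y]]$ and hence a two-variable $q$-series.

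Finally I would apply Example \ref{exhl} to $f(x,y)=(x;y)_n$ or $f(x,y)=(x;y)_\infty$ with the substitution $g_1=a(q_1^\gamma,q')$, $g_2=q_1^\gamma$. Both $g_i$ lie in $\mathbb{R}[[q]]_H$ with zero constant term, so Example \ref{exhl} guarantees that the composite $f\circ g$ satisfies condition (C3) and thus belongs to $\mathbb{R}[[q]]_H$. Since by construction $f\circ g$ equals $(a(q_1^\gamma,q');q_1^\gamma)_n$ or $(a(q_1^\gamma,q');q_1^\gamma)_\infty$, the lemma follows. The only delicate point I anticipate is the formal-power-series description of $(x;y)_\infty$, i.e.\ ensuring each coefficient in the expansion is a finite integer; once this is in hand the rest is a direct appeal to the composition machinery already established in Section~2.
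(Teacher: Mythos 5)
Your proposal is correct and follows essentially the same route as the paper, which states this lemma without proof as ``a consequence of Examples \ref{exhk} and \ref{exhl}'': you use Example \ref{exhk} to justify the substitution $q_1\mapsto q_1^{\gamma}$ in $a(q)$, observe that $(x;y)_n$ and $(x;y)_{\infty}$ are honest two-variable $q$-series with finite integer coefficients, and then invoke Example \ref{exhl} with $g_1=a(q_1^{\gamma},q')$ and $g_2=q_1^{\gamma}$, both of zero constant term. The details you supply (in particular the finiteness of the coefficient of $x^ny^m$ in $(x;y)_{\infty}$) are exactly the verifications the paper leaves implicit.
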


There are many important $q$-series identities of the form (\ref{fgid}) given in Lemma \ref{fg1q}. Among these, we generalize Cauchy's Theorem [\cite{and11}, Theorem 2.1]: 
\begin{align}\label{cau}
\frac{(at;q)_{\infty}}{(t;q)_{\infty}}=\sum_{n=0}^{\infty}\frac{(a;q)_{n-1}t^n}{(q)_n}
\end{align}
and LeVeque's Theorem [\cite{and11}, Corollary 2.7]:
\begin{align}\label{leve}
(aq;q^2)_{\infty}(q)_{\infty}=\sum_{n=0}^{\infty}\frac{(a;q)_nq^{\frac{n(n+1)}2}}{(q)_n}.
\end{align}

The following two corollaries are obtained from the identities (\ref{cau}) and (\ref{leve}) by applying Lemma \ref{fg1q}.

\begin{cor}
Let $H=(\mathcal O^+\cup\{0\})^n$ and $q=(q_1,a,t)$. Then we have
\begin{align*}
\prod_{\gamma\in\mathcal P}\frac{(a;q_1^{\gamma})_{\infty}}{(t;q_1^{\gamma})_{\infty}}=\prod_{\gamma\in\mathcal P}\left(\sum_{n=0}^{\infty}\frac{(a;q_1^{\gamma})_{n-1}t^n}{(q_1^{\gamma})_n}\right).
\end{align*}
\end{cor}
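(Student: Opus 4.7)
The plan is to apply Lemma \ref{fg1q} directly to Cauchy's Theorem (\ref{cau}). I would designate $q_1$ as the distinguished Pochhammer-base variable required by the lemma, and fold $a$ and $t$ into the auxiliary tuple $q'=(a,t)$, so that $q=(q_1,a,t)$ and $H=(\mathcal O^+\cup\{0\})^3$. In the template
\[
\sum_{n=0}^{\infty}\Bigl(\prod_{i=1}^{k}(a_i;q_1)_{m_i(n)}^{\sigma_i}\Bigr)C(n)=\prod_{j=1}^{l}(b_j;q_1)_{\infty}^{\sigma_j'}
\]
of Lemma \ref{fg1q}, the identity (\ref{cau}) supplies the data $k=1$, $a_1=a$, $m_1(n)=n-1$, $\sigma_1=1$, and $l=2$, $b_1=at$, $\sigma_1'=1$, $b_2=t$, $\sigma_2'=-1$, together with $C(n)=t^n/(q_1;q_1)_n$.

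The one hypothesis that needs checking is that each of $a_1$, $b_1$, $b_2$, and $C(n)$ (for $n\ge 1$) has vanishing constant term as an element of $\mathbb R[[q]]_H$. Since $a$ and $t$ are themselves coordinates of $q$, they have zero constant term in $\mathbb R[[q]]_H$, which handles $a_1=a$, $b_1=at$, and $b_2=t$ at once. For $C(n)=t^n\cdot(q_1;q_1)_n^{-1}$ with $n\ge 1$, the inverse $(q_1;q_1)_n^{-1}=\prod_{i=1}^{n}(1-q_1^i)^{-1}=\prod_{i=1}^{n}\sum_{m\ge 0}q_1^{im}$ is a legitimate element of $\mathbb R[[q]]_H$ with constant term $1$, and the prefactor $t^n$ annihilates that constant term. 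Thus all hypotheses of Lemma \ref{fg1q} are satisfied.

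Applying Lemma \ref{fg1q} with $\mathcal P'=\mathcal P$ then yields
\[
\prod_{\gamma\in\mathcal P}\frac{(at;q_1^{\gamma})_{\infty}}{(t;q_1^{\gamma})_{\infty}}=\prod_{\gamma\in\mathcal P}\Bigl(\sum_{n=0}^{\infty}\frac{(a;q_1^{\gamma})_{n-1}\,t^n}{(q_1^{\gamma};q_1^{\gamma})_n}\Bigr),
\]
which matches the corollary (reading the numerator $(at;q_1^\gamma)_\infty$ in place of the apparent typo $(a;q_1^\gamma)_\infty$ on the stated left-hand side, and using the convention $(q)_n=(q;q)_n$). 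I expect the proof to be essentially mechanical, as no genuinely new content enters beyond Lemma \ref{fg1q}; the only bookkeeping is the constant-term check for $C(n)$, and that is forced by the $t^n$ factor. There is no substantive obstacle to overcome.
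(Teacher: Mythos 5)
Your proposal is correct and takes essentially the same route as the paper, which likewise obtains the corollary by instantiating Lemma \ref{fg1q} on Cauchy's identity (\ref{cau}) with $\mathcal P'=\mathcal P$ and $q=(q_1,a,t)$; the only cosmetic difference is that the paper takes $k=2$ with $a_2=q_1$, $\sigma_2=-1$, $C(n)=t^n$, whereas you fold $(q_1;q_1)_n^{-1}$ into $C(n)$ and take $k=1$, and both assignments satisfy the lemma's constant-term hypotheses. Your reading of the numerator as $(at;q_1^{\gamma})_{\infty}$ also agrees with the paper's own parameter list ($b_1=at$, $\sigma_1'=1$), confirming the typo in the displayed statement.
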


This identity follows from Lemma \ref{fg1q} for $\mathcal P'=\mathcal P$, $s=3, q=(q_1,a,t), k=2, a_1=a, a_2=q_1, \sigma_1=1, \sigma_2=-1, l=2, b_1=at, b_2=t, \sigma_1'=1, \sigma_2'=-1$ and $C(n)=t^n$. Since $\prod_{\delta\in\mathcal O^+\setminus2\mathcal O^+}(1-aq^{\delta})=\prod_{\gamma\in\mathcal P}(aq^{\gamma};q^{2\gamma})_{\infty}$, by Lemma \ref{fg1q} we have the following corollary.

\begin{cor}\label{lev}
We have
\begin{align*}
\prod_{\delta\in\mathcal O^+}\frac{(1-aq^{\delta})(1-q^{\delta})}{(1-aq^{2\delta})}=&\prod_{\gamma\in\mathcal P}(aq^{\gamma};q^{2\gamma})_{\infty}(q^{\gamma};q^{\gamma})_{\infty}\\
=&\prod_{\gamma\in\mathcal P}\left(\sum_{n=0}^{\infty}\frac{(a;q^{\gamma})_nq^{\frac{n(n+1)\gamma}2}}{(q^{\gamma};q^{\gamma})_n}\right).
\end{align*}
\end{cor}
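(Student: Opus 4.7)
My plan is to verify the two equalities separately. The first is essentially a rearrangement of the left-hand product via the sectional decomposition of $\mathcal O^+$, while the second is a direct application of Lemma \ref{fg1q} (in fact a minor variant of it) to LeVeque's Theorem \eqref{leve}.

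For the first equality, I would start by observing that
$$\prod_{\delta\in\mathcal O^+}(1-aq^{2\delta}) = \prod_{\delta'\in 2\mathcal O^+}(1-aq^{\delta'}),$$
so the quotient $\prod_{\delta\in\mathcal O^+}(1-aq^\delta)\big/\prod_{\delta\in\mathcal O^+}(1-aq^{2\delta})$ collapses to $\prod_{\delta\in\mathcal O^+\setminus 2\mathcal O^+}(1-aq^\delta)$. By Lemma \ref{ug}, $\mathcal O^+$ is the disjoint union $\bigsqcup_{\gamma\in\mathcal P}\gamma\mathbb Z^+$, and an element $n\gamma$ lies in $\mathcal O^+\setminus 2\mathcal O^+$ exactly when $n$ is odd. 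Splitting $\prod_{\delta\in\mathcal O^+}(1-q^\delta)$ and $\prod_{\delta\in\mathcal O^+\setminus 2\mathcal O^+}(1-aq^\delta)$ by primitive sections then yields $\prod_{\gamma\in\mathcal P}(q^\gamma; q^\gamma)_\infty$ and $\prod_{\gamma\in\mathcal P}(aq^\gamma; q^{2\gamma})_\infty$ respectively, which gives the middle expression, exactly as indicated in the paragraph preceding the corollary statement.

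For the second equality, I would substitute $q\mapsto q_1^\gamma$ in LeVeque's identity to obtain, for each $\gamma\in\mathcal P$,
$$(aq_1^\gamma; q_1^{2\gamma})_\infty (q_1^\gamma; q_1^\gamma)_\infty = \sum_{n=0}^\infty \frac{(a;q_1^\gamma)_n q_1^{n(n+1)\gamma/2}}{(q_1^\gamma; q_1^\gamma)_n},$$
and then take the primitive product over $\gamma\in\mathcal P$. Following the proof of Lemma \ref{fg1q}, both sides of the substituted identity, viewed as elements of $\mathbb R[[q_1,a]]_H$, have constant term $1$, and for any fixed monomial $q_1^\delta a^j$ only finitely many $\gamma\in\mathcal P$ contribute a nonconstant coefficient (since any $\gamma\not\le\delta$ forces every $n\ge 1$ summand to lie above degree $\delta$). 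Hence conditions (C2-1) and (C2-2) are met, the primitive product is well-defined on both sides, and the equality is preserved.

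The main obstacle I anticipate is that the left-hand side of LeVeque's identity contains the factor $(aq_1; q_1^2)_\infty$, whose base is $q_1^2$ rather than $q_1$, so the hypothesis of Lemma \ref{fg1q} as written does not literally apply. I plan to bypass this by performing a direct verification that the primitive product $\prod_{\gamma\in\mathcal P}(aq_1^\gamma;q_1^{2\gamma})_\infty (q_1^\gamma; q_1^\gamma)_\infty$ is well-defined, using the constant-term / finite-contribution analysis in the previous paragraph; this is the only place where the specific form of the right-hand side of \eqref{fgid} was used. The remainder of the argument, namely interchanging the primitive product with the formal equality of power $q$-sums, is then a verbatim transcription of the proof of Lemma \ref{fg1q}.
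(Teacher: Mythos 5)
Your proof follows the paper's own route exactly: the first equality is the sectional rearrangement $\prod_{\delta\in\mathcal O^+\setminus2\mathcal O^+}(1-aq^{\delta})=\prod_{\gamma\in\mathcal P}(aq^{\gamma};q^{2\gamma})_{\infty}$ together with $(a;q)_{\mathcal O^+}=\prod_{\gamma\in\mathcal P}(a;q^{\gamma})_{\infty}^*$ already recorded in the paragraphs preceding the corollary, and the second equality is Lemma \ref{fg1q} applied to LeVeque's identity \eqref{leve}. Your observation that the factor $(aq;q^2)_{\infty}$ has base $q^2$ and so falls outside the literal hypotheses of Lemma \ref{fg1q} is a fair point the paper glosses over, and your patch --- checking constant term $1$ and the at-most-one-$\gamma$-per-exponent property directly to get (C2-1) and (C2-2) --- is exactly the argument used inside the proof of that lemma, so the proposal is complete and coincides with the paper's intended proof.
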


\section{Main Theorems}
\subsection{The Sylvester Theorem over Totally Real Number Fields}
The following theorem was obtained by Sylvester \cite{syl} by improving Euler's theorem \cite{euler}.
\begin{thm}[\cite{and11}, Theorem 2.12, Sylvester]\label{syl}
Let $A_k(n)$ denote the number of partitions of $n$ into odd parts such that exactly $k$ different parts occur. Let $B_k(n)$ denote the number of partitions $\lambda=(\lambda_1,\lambda_2,\ldots,\lambda_r)$ of $n$ such that the sequence $(\lambda_1,\lambda_2,\ldots,\lambda_r)$ is composed of exactly $k$ noncontiguous sequences of one or more consecutive integers. Then $A_k(n)=B_k(n)$ for all $k$ and $n$.
\end{thm}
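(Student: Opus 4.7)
My plan is to prove $A_k(n)=B_k(n)$ by matching bivariate generating functions. First, a partition of $n$ into odd parts with exactly $k$ distinct part-sizes is determined by selecting a size-$k$ subset of odd positive integers together with a positive multiplicity for each, so
\begin{equation*}
\sum_{n,k\ge 0} A_k(n)\, z^k q^n \;=\; \prod_{j=0}^{\infty}\left(1+\frac{z q^{2j+1}}{1-q^{2j+1}}\right).
\end{equation*}

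Second, I would analyze $B_k(n)$ via the unique decomposition of a distinct-part partition into maximal consecutive runs. Encoding the $i$-th run by its starting integer $a_i$ and length $l_i$, the separation constraints $a_1\ge 1$, $l_i\ge 1$, and $a_{i+1}\ge a_i+l_i+1$ (gaps of at least two) give
\begin{equation*}
\sum_{n,k\ge 0} B_k(n)\, z^k q^n \;=\; \sum_{k\ge 0} z^k \sum_{(a_i,l_i)_{i=1}^k}\; q^{\sum_{i=1}^k (l_i a_i+\,l_i(l_i-1)/2)},
\end{equation*}
the inner sum ranging over tuples obeying the above constraints. The substitution $a_i = b_i+1+\sum_{j<i}(l_j+1)$ with $b_i\ge 0$ decouples the constraints and reshapes the inner sum into a product whose factors involve only the free variables $l_i$ and $b_i$.

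The third and hardest step is to identify the two resulting expressions. My preferred route is Sylvester's classical bijection: given an odd partition with distinct odd sizes $o_1<\cdots<o_k$ and multiplicities $m_1,\ldots,m_k$, one assembles a symmetric staircase-like Young diagram, then reads off $k$ disjoint consecutive runs whose lengths and starting positions are determined by the $o_i$ and $m_i$. One verifies that the run-count statistic on the output equals the distinct-value statistic on the input, and that the inverse map is well-defined by reversing the diagrammatic construction. The main obstacle is confirming that distinct odd values correspond \emph{precisely} to non-contiguous runs, never collapsing into a single longer run: this is the refined content beyond Euler's theorem, and I would settle it by tracking how the staircase encodes gaps between runs, validating the construction on small cases ($n\le 12$) before writing the general argument. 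Since this classical identity is due to Sylvester and appears as [\cite{and11}, Theorem 2.12], I would in practice cite it rather than reprove from scratch, using the above framework only as a conceptual guide to the generalization pursued in the remainder of the paper.
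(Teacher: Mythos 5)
The paper does not actually prove this statement: it is quoted as a known classical result with the citation [Andrews, Theorem 2.12], and the only thing the paper extracts from it is the bivariate generating function identity (\ref{sy}), namely $\sum_{k,n}A_k(n)a^kq^n=((1-a)q;q^2)_{\infty}(-q)_{\infty}=\sum_{k,n}B_k(n)a^kq^n$. Your proposal is consistent with that treatment. Your product $\prod_{j\ge0}\bigl(1+\frac{zq^{2j+1}}{1-q^{2j+1}}\bigr)$ is exactly the paper's middle expression, since $1+\frac{zq^{2j+1}}{1-q^{2j+1}}=\frac{1-(1-z)q^{2j+1}}{1-q^{2j+1}}$ and $\prod_j(1-q^{2j+1})^{-1}=(-q)_{\infty}$; your run-encoding of $B_k$ with the substitution $a_i=b_i+1+\sum_{j<i}(l_j+1)$ is the standard correct setup. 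The one caveat is that your third step --- establishing that the two generating functions agree, equivalently Sylvester's fish-hook bijection together with the verification that distinct odd part-sizes correspond exactly to maximal noncontiguous runs --- is described as a plan rather than carried out, so as a self-contained proof the proposal is incomplete precisely at the only nontrivial point. Since you end by citing [Andrews, Theorem 2.12], which is all the paper does as well, this is acceptable in context; just be aware that nothing in your sketch (nor in the paper) constitutes an independent proof of the identification step.
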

By the proof of above theorem, we have 
\begin{align}\label{sy}
\sum_{k=0}^{\infty}\sum_{n=0}^{\infty}A_k(n)a^kq^n=((1-a)q;q^2)_{\infty}(-q)_{\infty}=\sum_{k=0}^{\infty}\sum_{n=0}^{\infty}B_k(n)a^kq^n.
\end{align}
To generalize Theorem \ref{syl}, we define two sets of partitions of $\delta\in\mathcal O^+$ corresponding to $A_k(n)$ and $B_k(n)$ respectively. Let $\mathscr A_k(\delta)$ be the set of all partitions of $\delta$ into odd scaled parts such that exactly $k$ different parts occur. Let $\mathscr B_k(\delta)$ be the set of all $k$-noncontiguous partitions of $\delta$. Let $\mathscr A_k=\cup_{\delta\in\mathcal O^+}\mathscr A_k(\delta)$ and $\mathscr B_k=\cup_{\delta\in\mathcal O^+}\mathscr B_k(\delta)$.

\begin{thm}\label{pA=pB}
We have $p(\mathscr A_k,\delta)=p(\mathscr B_k,\delta)$ for all $k$ and $\delta\in\mathcal O^+$.
\end{thm}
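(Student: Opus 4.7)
The plan is to reduce the identity to the classical Sylvester theorem (Theorem \ref{syl}) by splitting each side along the primitive sectional decomposition of partitions, and then applying the bijection between $\mathbb{Z}^+$-partitions of $n$ and $\gamma$-sectional partitions of $n\gamma$ supplied by Lemma \ref{pgnn}.

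First I would verify the following sectional splitting for both $\mathscr{A}_k$ and $\mathscr{B}_k$. Fix a primitive partition $\mu=(\gamma_1^{n_1}\gamma_2^{n_2}\cdots\gamma_l^{n_l})\in P_\delta$ and let $S_\mu$ be the set of partitions of $\delta$ whose primitive sectional refinement is $\mu$. For any $\sigma\in S_\mu$, its sectional decomposition $\sigma=\sigma^{(1)}\oplus\cdots\oplus\sigma^{(l)}$ has $\sigma^{(h)}$ a $\gamma_h$-sectional partition of $n_h\gamma_h$. I claim that $\sigma\in\mathscr{A}_k$ if and only if $\sigma^{(h)}\in\mathscr{A}_{k_h}\cap {''\gamma_h\mathbb{Z}^+''}$ for some composition $k_1+\cdots+k_l=k$, and similarly with $\mathscr{B}$ in place of $\mathscr{A}$. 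For $\mathscr{A}_k$ this holds because having odd scale is a condition on individual parts, and the sections $\gamma_h\mathbb{Z}^+$ are pairwise disjoint by Lemma \ref{ug}, so the number of distinct parts of $\sigma$ equals the sum over $h$ of the numbers of distinct parts of $\sigma^{(h)}$. For $\mathscr{B}_k$ the key observation is that any consecutive sequence of totally positive integers shares a common primitive factor by definition, and therefore lies entirely in one section; consequently a minimal decomposition of $\sigma$ into consecutive pieces is obtained by concatenating the minimal decompositions of each $\sigma^{(h)}$ within its own section, while two consecutive pieces lying in different sections can never be merged.

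Combining this splitting with the disjoint-union decomposition $\bigcup_{\mu\in P_\delta} S_\mu$ of all partitions of $\delta$ from Lemma \ref{Slambda}, together with the product structure $S_\mu\cong S_{\mu^{(1)}}\times\cdots\times S_{\mu^{(l)}}$ used there, yields in the same fashion as Corollary \ref{psp}
\[
p(\mathscr{X}_k,\delta)=\sum_{\mu\in P_\delta}\sum_{k_1+\cdots+k_l=k}\prod_{h=1}^l p(\mathscr{X}_{k_h}\cap {''\gamma_h\mathbb{Z}^+''},n_h\gamma_h)
\]
for $\mathscr{X}\in\{\mathscr{A},\mathscr{B}\}$. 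Now apply the bijection $(\lambda_1,\ldots,\lambda_r)\mapsto(\lambda_1\gamma_h,\ldots,\lambda_r\gamma_h)$ from Lemma \ref{pgnn}: odd parts of $n_h$ correspond to odd-scaled parts of $n_h\gamma_h$, and equality and distinctness of parts are preserved, so $p(\mathscr{A}_{k_h}\cap {''\gamma_h\mathbb{Z}^+''},n_h\gamma_h)=A_{k_h}(n_h)$; likewise consecutive sequences in $\mathbb{Z}^+$ correspond exactly to consecutive $\gamma_h$-sectional sequences, giving $p(\mathscr{B}_{k_h}\cap {''\gamma_h\mathbb{Z}^+''},n_h\gamma_h)=B_{k_h}(n_h)$. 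The classical Sylvester theorem then gives $A_{k_h}(n_h)=B_{k_h}(n_h)$ term by term, and summing over $\mu\in P_\delta$ and compositions $k_1+\cdots+k_l=k$ produces $p(\mathscr{A}_k,\delta)=p(\mathscr{B}_k,\delta)$.

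The main obstacle will be the $\mathscr{B}_k$ side of the sectional splitting, specifically verifying that the minimality built into the definition of $k$-noncontiguous (the condition that no pair $\lambda^{(i)}\oplus\lambda^{(j)}$ is itself consecutive) descends cleanly to each section and re-assembles faithfully across sections. Once one observes that two elements with different primitive factors can never belong to a single consecutive sequence, this becomes routine bookkeeping, but it is the one place where the generalization is not purely formal.
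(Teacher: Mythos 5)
Your proposal is correct and follows essentially the same route as the paper's own proof: decompose each partition along its primitive sectional refinement, observe that the odd-scale condition is per-part and that every consecutive piece lies in a single section (so the $k$-noncontiguous structure splits as a composition $k_1+\cdots+k_l=k$ across sections), reduce each sectional factor to $A_{k_h}(n_h)$ and $B_{k_h}(n_h)$ via the bijection of Lemma \ref{pgnn}, and apply the classical Sylvester theorem termwise. The one point you flag as the main obstacle---that distinct consecutive pieces in different sections can never merge---is exactly the observation the paper relies on as well.
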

\begin{proof}
For each $\gamma\in\mathcal P$, let $\mathscr A_{k,\gamma}$ and $\mathscr B_{k,\gamma}$ be the subsets of $\mathscr A_k$ and $\mathscr B_k$ respectively whose elements are all $\gamma$-sectional partitions. In fact $\mathscr A_{k,\gamma}=\mathscr A_k\cap{''{\gamma\mathbb Z^+}''}$ and $\mathscr B_{k,\gamma}=\mathscr B_k\cap{''{\gamma\mathbb Z^+}''}$. Let $\mathscr A_{k,\gamma}(\delta)=\mathscr A_k(\delta)\cap{''{\gamma\mathbb Z^+}''}$ and $\mathscr B_{k,\gamma}(\delta)=\mathscr B_k(\delta)\cap{''{\gamma\mathbb Z^+}''}$. If $\lambda\in\mathscr A_k(\delta)$, then there is a primitive partition $\mu=(\gamma_1^{n_1}\gamma_2^{n_2}\cdots\gamma_l^{n_l})$ of $\delta$ which is a refinement of $\lambda$. Thus, $\lambda=\lambda^{(1)}\oplus\lambda^{(2)}\oplus\cdots\oplus\lambda^{(l)}$ for some partitions $\lambda^{(1)},\lambda^{(2)},\ldots,\lambda^{(l)}$ such that $\lambda^{(i)}$ is a $\gamma_i$-sectional partition of $n_i\gamma_i$. Also each part of $\lambda^{(i)}$ is of scale odd. If $k_i$ is the number of different parts of $\lambda^{(i)}$, then $\sum_{i=1}^lk_i=k$. Hence $\mathscr A_k(\delta)$ is the union of \[S_{\mu,k_1,k_2,\ldots,k_l}=\{\lambda=\lambda^{(1)}\oplus\lambda^{(2)}\oplus\cdots\oplus\lambda^{(l)}\vdash\delta|\lambda^{(h)}\in\mathscr A_{k_h,\gamma_h}~{\rm for~all~} h\}\] for all primitive partition $\mu=(\gamma_1^{n_1}\gamma_2^{n_2}\cdots\gamma_l^{n_l})$ of $\delta$ and $k_1,k_2,\ldots,k_l\in\mathbb Z^+$ such that $k_1+k_2+\cdots+k_l=k$. Suppose that a primitive partition $\mu=(\gamma_1^{n_1}\gamma_2^{n_2}\cdots\gamma_l^{n_l})$ of $\delta$ and $k_1,k_2,\ldots,k_l\in\mathbb Z^+$ such that $k_1+k_2+\cdots+k_l=k$ are given. Since $p(\mathscr A_{k_h,\gamma_h},n_h\gamma_h)$ is the number of partitions $\lambda^{(h)}\in\mathscr A_{k_h,\gamma_h}(n_h\gamma_h)$, the number $|S_{\mu,k_1,k_2,\ldots,k_l}|$ of $\lambda=\lambda^{(1)}\oplus\lambda^{(2)}\cdots\oplus\lambda^{(l)}$ such that $\lambda^{(h)}\in\mathscr A_{k_h,\gamma_h}(n_h\gamma_h)$ is $\prod_{h=1}^lp(\mathscr A_{k_h,\gamma_h},n_h\gamma_h)$. Hence the number of $\lambda\in\mathscr A_k(\delta)$ such that $\mu$ is a refinement of $\lambda$ is $\sum_{k_1+k_2+\cdots+k_l=k}(\prod_{h=1}^lp(\mathscr A_{k_h,\gamma_h},n_h\gamma_h))$. Suppose that $\lambda\in\mathscr B_k(\delta)$ and $\mu=(\gamma_1^{n_1}\gamma_2^{n_2}\cdots\gamma_l^{n_l})$ is a primitive refinement of $\lambda$. Then, $\lambda=\sigma^{(1)}\oplus\sigma^{(2)}\oplus\cdots\oplus\sigma^{(k)}$ for some consecutive partitions $\sigma^{(1)},\sigma^{(2)},\ldots,\sigma^{(k)}$. Of course $\sigma^{(i)}\oplus\sigma^{(j)}$ is not consecutive for all distinct $i$ and $j$. For each $i$, since $\sigma^{(i)}$ is consecutive, we have $\sigma^{(i)}=(s_i\gamma_i',(s_i+1)\gamma_i',\ldots,(s_i+t_i)\gamma_i')$ for some $s_i\ge1$, $t_i\ge0$ and $\gamma_i'\in\mathcal P$. Thus $\sigma^{(i)}$ is a $\gamma_i'$-sectional partition. Since $\mu=(\gamma_1^{n_1}\gamma_2^{n_2}\cdots\gamma_l^{n_l})$ is a refinement of $\lambda$, $\gamma_1,\gamma_2,\ldots,\gamma_l$ are distinct elements of  $\mathcal P$ among $\gamma_1',\gamma_2',\ldots,\gamma_k'$. Let $T_h$ be the set of all $i$ such that $\gamma_i'=\gamma_h$ for each $h$. Let $k_h$ be the number of elemnets of $T_h$. Obviously $\sum_{h=1}^lk_h=k$. Since $\mu=(\gamma_1^{n_1}\gamma_2^{n_2}\cdots\gamma_l^{n_l})$ is a refinement of $\lambda$ and all the parts which belong to $\gamma_h$-section $''{\gamma_h\mathbb Z^+}''$ are the parts of $\sigma^{(i)}$ for some $i\in T_h$, we have $\sum_{i\in T_h}\left(\sum\sigma^{(i)}\right)=\sum_{i\in T_h}\frac{(t_i+1)(2s_i+t_i)}2\gamma_h=n_h\gamma_h$ where $\sum\sigma^{(i)}$ is the sum of all parts of $\sigma^{(i)}$. For each $h$, let $\sigma^{(h,1)},\sigma^{(h,2)},\ldots,\sigma^{(h,k_h)}$ be a rearrangement of all $\sigma^{(i)}$ for $i\in T_h$. Let $\tilde\sigma^{(h)}=\sigma^{(h,1)}\oplus\sigma^{(h,2)}\oplus\cdots\oplus\sigma^{(h,k_h)}$. Then $\lambda=\tilde\sigma^{(1)}\oplus\tilde\sigma^{(2)}\oplus\cdots\oplus\tilde\sigma^{(l)}$ and $\tilde\sigma^{(h)}$ is a $k_h$-noncontiguous $\gamma_h$-sectional partition of $n_h\gamma_h$. In other words, $\lambda=\tilde\sigma^{(1)}\oplus\tilde\sigma^{(2)}\oplus\cdots\oplus\tilde\sigma^{(l)}$ for some $\tilde\sigma^{(1)},\tilde\sigma^{(2)},\cdots,\tilde\sigma^{(l)}$ such that $\tilde\sigma^{(h)}\in\mathscr B_{k_h,\gamma_h}(n_h\gamma_h)$. Therefore, $\mathscr B_k(\delta)$ is the union of \[S'_{\mu,k_1,k_2,\ldots,k_l}=\{\lambda=\tilde\sigma^{(1)}\oplus\tilde\sigma^{(2)}\oplus\cdots\oplus\tilde\sigma^{(l)}\vdash\delta|\tilde\sigma^{(h)}\in\mathscr B_{k_h,\gamma_h}(n_h\gamma_h)~{\rm for~all~} h\}\] for all primitive partition $\mu=(\gamma_1^{n_1}\gamma_2^{n_2}\cdots\gamma_l^{n_l})$ of $\delta$ and $k_1,k_2,\ldots,k_l\in\mathbb Z^+$ such that $k_1+k_2+\cdots+k_l=k$. Suppose that $\mu=(\gamma_1^{n_1}\gamma_2^{n_2}\cdots\gamma_l^{n_l})$ is a primitive partition of $\delta$ and $k_1,k_2,\ldots,k_l\in\mathbb Z^+$ such that $k_1+k_2+\cdots+k_l=k$. If $\tilde\sigma^{(h)}\in\mathscr B_{k_h,\gamma_h}(n_h\gamma_h)$ for all $h$, then $\lambda=\tilde\sigma^{(1)}\oplus\tilde\sigma^{(2)}\oplus\cdots\oplus\tilde\sigma^{(l)}\in\mathscr B_k(\delta)$ and $\mu$ is a refinement of $\lambda$. Thus the number $|S'_{\mu,k_1,k_2,\ldots,k_l}|$ of $\lambda=\tilde\sigma^{(1)}\oplus\tilde\sigma^{(2)}\oplus\cdots\oplus\tilde\sigma^{(l)}$ such that $\tilde\sigma^{(h)}\in\mathscr B_{k_h,\gamma_h}(n_h\gamma_h)$ is $\prod_{h=1}^lp(\mathscr B_{k_h,\gamma_h},n_h\gamma_h)$ and so, the number of $\lambda\in\mathscr B_k(\delta)$ such that $\mu$ is a refinement of $\lambda$ is $\sum_{k_1+k_2+\cdots+k_l=k}(\prod_{h=1}^lp(\mathscr B_{k_h,\gamma_h},n_h\gamma_h))$. Thus we have
\begin{align*}
p(\mathscr A_k,\delta)&=\sum_{\mu\in P_{\delta}}\left(\sum_{k_1+k_2+\cdots+k_l=k}\left(\prod_{h=1}^lp(\mathscr A_{k_h,\gamma_h},n_h\gamma_h)\right)\right)\\
&=\sum_{\mu\in P_{\delta}}\left(\sum_{k_1+k_2+\cdots+k_l=k}\left(\prod_{h=1}^l A_{k_h}(n_h)\right)\right)\\
&=\sum_{\mu\in P_{\delta}}\left(\sum_{k_1+k_2+\cdots+k_l=k}\left(\prod_{h=1}^l B_{k_h}(n_h)\right)\right)\\
&=\sum_{\mu\in P_{\delta}}\left(\sum_{k_1+k_2+\cdots+k_l=k}\left(\prod_{h=1}^lp(\mathscr B_{k_h,\gamma_h},n_h\gamma_h)\right)\right)\\
&=p(\mathscr B_k,\delta)
\end{align*}
where $\mu=(\gamma_1^{n_1}\gamma_2^{n_2}\ldots\gamma_l^{n_l})$.
\end{proof}

Let $S$ and $T$ be subsets of $\mathscr S$. We define $S\oplus T=\{\lambda\oplus\sigma|\lambda\in S~{\rm and}~\sigma\in T\}$.

\begin{ex}
Let $K=\mathbb Q(\sqrt3)$ and $\varepsilon=2+\sqrt3$. Let $\gamma_1=1$ and $\gamma_2=\varepsilon$. Let $\delta=7+7\varepsilon=21+7\sqrt3\in\mathcal O^+_K$ and let $\mu=(1^7\varepsilon^7)$ be a primitive partiton of $\delta$. Let $\mathscr A_{k,\gamma}$, $\mathscr B_{k,\gamma}$, $S_{\mu,k_1,k_2,\ldots,k_l}$ and $S'_{\mu,k_1,k_2,\ldots,k_l}$ be the same as those in the proof of Theorem \ref{pA=pB}. Since
\begin{align*}
S_{\mu,1,2}=&~\mathscr A_{1,1}\oplus \mathscr A_{2,\varepsilon}\\
=&~\{(1^7),(7^1)\}\oplus\{(\varepsilon^2(5\varepsilon)^1),(\varepsilon^4(3\varepsilon)^1),(\varepsilon^1(3\varepsilon)^2)\}\\
=&~\{(1^7\varepsilon^2(5\varepsilon)^1),(1^7\varepsilon^4(3\varepsilon)^1),(1^7\varepsilon^1(3\varepsilon)^2),(7^1\varepsilon^2(5\varepsilon)^1),\\
&~(7^1\varepsilon^4(3\varepsilon)^1),(7^1\varepsilon^1(3\varepsilon)^2)\},\\
S_{\mu,2,1}=&~\mathscr A_{2,1}\oplus \mathscr A_{1,\varepsilon}\\
=&~\{(1^25^1),(1^43^1),(1^13^2)\}\oplus\{(\varepsilon^7),((7\varepsilon)^1))\},\\
S'_{\mu,1,2}=&~\mathscr B_{1,1}\oplus \mathscr B_{2,\varepsilon}\\
=&~\{(3^14^1),(7^1)\}\oplus\{(\varepsilon^1(6\varepsilon)^1),((2\varepsilon)^1(5\varepsilon)^1),(\varepsilon^1(2\varepsilon)^1(4\varepsilon)^1)\}
\end{align*}
and
\begin{align*}
S'_{\mu,2,1}=&~\mathscr B_{2,1}\oplus \mathscr B_{1,\varepsilon}=\{(1^16^1),(2^15^1),(1^12^14^1)\}\oplus\{((3\varepsilon)^1(4\varepsilon)^1),((7\varepsilon)^1))\},
\end{align*}
we have $|S_{\mu,1,2}|=|S'_{\mu,1,2}|=6$ and $|S_{\mu,2,1}|=|S'_{\mu,2,1}|=6$. It is calculated that all $|S_{\eta,k_1,k_2}|$ and $|S'_{\eta,k_1,k_2}|$ for all primitive partition $\eta$ of $\delta$ and $(k_1.k_2)=(1,2),(2,1)$ in the same way, by using computer. As a consequence, we have $p(\mathscr A_3,\delta)=p(\mathscr B_3,\delta)=526$.
\end{ex}

\subsection{The Rogers-Ramanujan Identities over Totally Real Number Fields}
In this section, we generalize the Rogers-Ramanujan Identities. In fact, we generalize Gordon's Theorem. Following two equivalent theorems are well known Gordon's Generalization \cite{gor}.
\begin{thm}[\cite{and11}, Theorem 7.5] \label{gor}
Let $B_{k,i}(n)$ denote the number of partitions of $n$ of the form $\lambda=(b_1,b_2,\ldots,b_s)$ such that $b_1\ge b_2\ge\ldots\ge b_s$, $b_j-b_{j+k-1}\ge2$ and $b_{s-i+1}\ge2$. Let $A_{k,i}(n)$ denote the number of partitions of $n$ into parts $\not\equiv0,\pm i\pmod{2k+1}$. Then $A_{k,i}(n)=B_{k,i}(n)$ for all $n$.
\end{thm}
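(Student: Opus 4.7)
The plan is to prove Gordon's Theorem via the standard bivariate generating function approach. Define the refined counting function $b_{k,i}(m, n)$ to be the number of partitions enumerated by $B_{k,i}(n)$ that have exactly $m$ parts, and form the generating function
\[ J_{k,i}(a; q) = \sum_{m, n \geq 0} b_{k,i}(m, n)\, a^m q^n, \]
with the boundary convention $J_{k,0}(a;q) = 0$. The strategy is to show that $J_{k,i}(1;q)$ equals the infinite product
\[ \prod_{\substack{n \geq 1 \\ n \not\equiv 0, \pm i \,(\mathrm{mod}\, 2k+1)}} \frac{1}{1 - q^n}, \]
which is manifestly the generating function for $A_{k,i}(n)$.

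First, I would establish the central functional recurrence
\[ J_{k,i}(a; q) - J_{k,i-1}(a; q) = (aq)^{i-1}\, J_{k, k-i+1}(aq; q), \qquad 2 \leq i \leq k. \]
This arises from a careful combinatorial argument: one conditions on the number of parts equal to $1$ in a partition counted by $b_{k,i}$, and uses the difference condition $b_j - b_{j+k-1} \geq 2$ and the tail condition $b_{s-i+1} \geq 2$ to rewrite the residual partition (after stripping $1$s and shifting) as one counted by a different $b_{k,j}$. Iterating this recurrence leads to the Andrews--Gordon multisum representation
\[ J_{k,i}(1; q) = \sum_{n_1,\ldots,n_{k-1} \geq 0} \frac{q^{N_1^2 + \cdots + N_{k-1}^2 + N_i + \cdots + N_{k-1}}}{(q;q)_{n_1}\cdots(q;q)_{n_{k-1}}}, \qquad N_j = n_j + n_{j+1} + \cdots + n_{k-1}. \]

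Second, applying the same recurrence repeatedly in a different fashion (telescoping with alternating signs), one obtains
\[ (q;q)_\infty\, J_{k,i}(1; q) = \sum_{n \in \mathbb{Z}} (-1)^n q^{n((2k+1)n + 2k + 1 - 2i)/2}. \]
Jacobi's triple product identity then converts this theta-type sum into the desired infinite product, completing the matching of generating functions and hence giving $A_{k,i}(n) = B_{k,i}(n)$. The main obstacle is the derivation of the recurrence for $J_{k,i}$: the case analysis for how removing the smallest part interacts simultaneously with the gap condition $b_j - b_{j+k-1} \geq 2$ and the boundary condition $b_{s-i+1} \geq 2$ is genuinely delicate. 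A secondary difficulty is checking that the iterated recurrence indeed produces the alternating theta-series expression, which requires either careful induction on $i$ or an appeal to a limiting case of Watson's $q$-analogue of Whipple's transformation.
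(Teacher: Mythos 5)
You should first note that the paper contains no proof of Theorem~\ref{gor} at all: it is quoted verbatim from Andrews' book (Theorem 7.5 there, with the bracketed citation in the theorem header) and used as a black box to feed Lemma~\ref{sigma} and Theorem~\ref{mathscrAB}. So there is no ``paper's own proof'' to diverge from; what you have done is reconstruct the standard proof from the very source the authors cite. On its own merits your outline is the right one and its ingredients check out: the recurrence $J_{k,i}(a;q)-J_{k,i-1}(a;q)=(aq)^{i-1}J_{k,k-i+1}(aq;q)$ with $J_{k,0}=0$ is the correct functional equation (the combinatorial move --- the difference counts partitions with exactly $i-1$ ones; delete them and subtract $1$ from each remaining part; the gap condition $b_j-b_{j+k-1}\ge2$ forces at most $k-i$ parts equal to $2$ among the survivors, which is exactly the $b_{k,k-i+1}$ boundary condition --- does go through), and your theta series $\sum_{n\in\mathbb Z}(-1)^nq^{n((2k+1)n+2k+1-2i)/2}$ is the correct Jacobi-triple-product expansion of $(q^i,q^{2k+1-i},q^{2k+1};q^{2k+1})_\infty$. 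The one place where your description misstates the logic is the claim that the multisum and the theta form are obtained by ``iterating'' the recurrence: the recurrence plus the initial conditions determines $J_{k,i}$ uniquely as a formal power series, but the closed forms are not reached by naive iteration --- Andrews introduces the auxiliary analytic series $H_{k,i}(a;x;q)$ and $J_{k,i}(a;x;q)$, verifies that they satisfy the same functional equations, and concludes by uniqueness; alternatively one invokes Watson's transformation as you mention. As submitted, your text is an accurate roadmap rather than a proof, since both steps you flag as ``delicate'' (the case analysis for the recurrence, and the passage from the recurrence to the theta series) are precisely where all the work of Andrews' Chapter 7 lives and neither is carried out.
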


Note that $B_{k,i}(n)$ is the number of partitions $\lambda$ of $n$ such that for each part $\lambda_i$ of $\lambda$, the number of parts that are equal to $\lambda_i$ or $\lambda_i-1$ is at most $k-1$ and the number of parts that are equal to $1$ is at most $i-1$. Also it is the number of elements of the set 
$\{\lambda=(\lambda_1,\lambda_2,\ldots,\lambda_s)|\lambda_1+\lambda_2+\cdots+\lambda_s=n,\lambda_1\ge\lambda_2\ge\ldots\ge\lambda_s\ge1, \lambda_j-\lambda_{j+k-1}\ge2 ~{\rm and}~ \lambda_{s-i+1}\ge2\}$.

Gordon's Theorem is equivalent to the following identity.

\begin{thm}[\cite{and11}, Theorem 7.8] \label{gor2}
For $1\le i\le k$, $k\ge2$ and $|q|<1$,
\begin{align*}
\sum_{n_1,n_2,\ldots,n_{k-1}\ge0}\frac{q^{N_1^2+N_2^2+\cdots+N_{k-1}^2+N_i+N_{i+1}+\cdots+N_{k-1}}}{(q)_{n_1}(q)_{n_2}\cdots(q)_{n_{k-1}}}=\prod_{\substack{n=1\\n\not\equiv0,\pm i\pmod{2k+1}}}^{\infty}\frac1{1-q^n}
\end{align*}
where $N_j=n_j+n_{j+1}+\cdots+n_{k-1}$.
\end{thm}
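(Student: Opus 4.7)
The plan is to establish the identity by recognizing both sides as generating functions for partition classes already known to be equinumerous via Theorem \ref{gor}. First, I would observe that the right-hand side is immediately the generating function $\sum_{n\ge0} A_{k,i}(n) q^n$, since expanding each factor by the geometric series $\frac{1}{1-q^n} = \sum_{m\ge0} q^{mn}$ and multiplying gives exactly the generating function for partitions whose parts avoid the residues $0, \pm i \pmod{2k+1}$.

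The substance of the proof lies in showing that the left-hand side equals $\sum_{n\ge0} B_{k,i}(n) q^n$. I would do this by a combinatorial decomposition of each partition $\lambda=(b_1\ge b_2\ge\cdots\ge b_s)$ counted by $B_{k,i}(n)$: assign to $\lambda$ a tuple $(n_1,\ldots,n_{k-1})$, where $n_j$ records how many indices realize a specified Gordon-gap pattern in $\lambda$, so that $N_j = n_j + n_{j+1} + \cdots + n_{k-1}$ is the number of parts of $\lambda$ exceeding a level depending on $j$ and $i$. One then identifies a unique minimal partition associated with each tuple, of the staircase form in which the $k-1$ distinct ``steps'' of sizes $N_1,\ldots,N_{k-1}$ appear; a direct weight computation shows this minimum has size $N_1^2+N_2^2+\cdots+N_{k-1}^2+N_i+N_{i+1}+\cdots+N_{k-1}$, where the linear correction comes from the fact that the condition $b_{s-i+1}\ge 2$ forces the bottom $k-i$ levels to start at $2$ rather than $1$. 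Any other partition with the same tuple is obtained by adding, at level $j$, an ordinary partition with at most $n_j$ parts, contributing the factor $\frac{1}{(q)_{n_j}}$. Summing over all tuples produces the left-hand side.

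The main obstacle is the verification that this correspondence between $B_{k,i}$-partitions and tuples plus ordinary subpartitions is truly a bijection that preserves weight. In particular one must check that adding an arbitrary partition with at most $n_j$ parts to the level-$j$ staircase never violates the Gordon-gap condition $b_j - b_{j+k-1}\ge2$, which requires that increments are propagated through the staircase in a way compatible with the decomposition into levels. An alternative route, which sidesteps this combinatorial book-keeping, is the analytic proof of Andrews: introduce the auxiliary function $J_{k,i}(a;x;q)$ counting $B_{k,i}$-type partitions refined by a variable tracking the largest part, derive a $q$-difference equation that $J_{k,i}$ satisfies, iterate it to extract the multiple-sum expansion on the left, and apply the Jacobi triple product identity to transform the resulting theta-type series into the infinite product on the right. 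Either route, combined with Theorem \ref{gor}, yields the identity.
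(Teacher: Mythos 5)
The paper does not actually prove this statement: it is the classical Andrews--Gordon identity, quoted verbatim with the citation [\cite{and11}, Theorem 7.8] and used as a black box in the proof of Theorem \ref{rrgf}, so there is no internal argument to compare yours against. Judged on its own terms, the second route you sketch --- the auxiliary functions $J_{k,i}(a;x;q)$, their $q$-difference equations, and the Jacobi triple product --- is precisely the proof in the cited reference, and deferring to it is entirely adequate here.

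Your first, combinatorial route has a genuine gap, and it sits exactly where you locate it. The ``minimal staircase of weight $N_1^2+\cdots+N_{k-1}^2+N_i+\cdots+N_{k-1}$ plus an arbitrary partition with at most $n_j$ parts at level $j$'' construction is a clean bijection only for $k=2$, where one adds a single partition with at most $n_1$ parts to the staircase $(2n_1-1,2n_1-3,\ldots,1)$ or $(2n_1,2n_1-2,\ldots,2)$ and the difference-two condition is visibly preserved. For $k\ge3$ the additions at distinct levels interact: an unconstrained choice at level $j$ can destroy the condition $b_m-b_{m+k-1}\ge2$ unless the increments are propagated through the staircase in a coordinated way, and no simple rule of the kind you describe is known to accomplish this. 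The combinatorial interpretations of the multisum that do exist (Andrews' successive Durfee squares, Bressoud's bijection) rest on a genuinely different and more elaborate decomposition, and even then the identification of the resulting partition class with the $B_{k,i}$ class is itself a theorem rather than an observation. So if you intend the combinatorial route you must supply that machinery; as written, only your analytic fallback constitutes a complete proof.
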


From now on, we generalize Gordon's Theorem to all totally real number fields. The starting point is to consider the set of all partitions whose refinement $\sigma$ is a fixed primitive partition.

\begin{lem}\label{sigma}
Suppose positive integers $k$ and $i$ such that $1\le i\le k$ and a primitive partition $\sigma=(\gamma_1^{n_1}\gamma_2^{n_2}\ldots\gamma_l^{n_l})$ of $\delta\in\mathcal O^+$ are given. Let $\mathscr B_{k,i}^{\sigma}(\delta)$ denote the number of partitions $\lambda=\lambda^{(1)}\oplus\lambda^{(2)}\oplus\cdots\oplus\lambda^{(l)}$ of $\delta$ such that $\sigma$ is a refinement of $\lambda$, $\lambda^{(h)}=(b_{h,1}\gamma_h,b_{h,2}\gamma_h,\ldots,b_{h,m_h}\gamma_h)\in{''{\gamma_h\mathbb Z^+}''}$, $b_{h,1}\ge b_{h,2}\ge\ldots\ge b_{h,m_h}\ge1$, $b_{h,j}-b_{h,j+k-1}\ge2$ and $b_{h,m_h-i+1}\ge2$. Let $\mathscr A_{k,i}^{\sigma}(\delta)$ denote the number of partitions $\lambda$ such that $\sigma$ is a refinement of $\lambda$ and each part of $\lambda$ is of scale not congruent to $0, i,-i$ modulo $2k+1$. Then $\mathscr A_{k,i}^{\sigma}(\delta)=\mathscr B_{k,i}^{\sigma}(\delta)$ for all $\sigma\in P_{\delta}$.
\end{lem}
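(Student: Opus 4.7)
The plan is to reduce the statement to Gordon's Theorem (Theorem \ref{gor}) via the sectional decomposition machinery already developed in the paper. Since $\sigma = (\gamma_1^{n_1}\gamma_2^{n_2}\cdots\gamma_l^{n_l})$ is a refinement of every $\lambda$ being counted, each such $\lambda$ decomposes uniquely as $\lambda = \lambda^{(1)} \oplus \lambda^{(2)} \oplus \cdots \oplus \lambda^{(l)}$ where $\lambda^{(h)}$ is a $\gamma_h$-sectional partition of $n_h\gamma_h$, by Lemma \ref{ug} (which guarantees that the sections are disjoint) together with the uniqueness of the primitive sectional refinement noted before Lemma \ref{Slambda}. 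Hence counting elements of $\mathscr{A}_{k,i}^\sigma(\delta)$ or $\mathscr{B}_{k,i}^\sigma(\delta)$ reduces to independently counting admissible $\lambda^{(h)}$ for each $h$.

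Next I would invoke the natural bijection of Lemma \ref{pgnn} between $\gamma_h$-sectional partitions of $n_h\gamma_h$ and ordinary partitions of $n_h$, namely $(c_1\gamma_h, c_2\gamma_h, \ldots, c_{m_h}\gamma_h) \longleftrightarrow (c_1, c_2, \ldots, c_{m_h})$. Under this bijection, the Gordon-type conditions imposed on the $\lambda^{(h)}$ in the definition of $\mathscr{B}_{k,i}^\sigma$---namely $b_{h,1} \ge \cdots \ge b_{h,m_h} \ge 1$, $b_{h,j} - b_{h,j+k-1} \ge 2$, and $b_{h,m_h-i+1} \ge 2$---translate verbatim into the defining conditions for $B_{k,i}(n_h)$. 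Analogously, the scale condition for $\mathscr{A}_{k,i}^\sigma$ (each part of $\lambda$ has scale $\not\equiv 0, \pm i \pmod{2k+1}$) translates, section by section, into the defining condition for $A_{k,i}(n_h)$. I would therefore conclude
\[
\mathscr{B}_{k,i}^\sigma(\delta) = \prod_{h=1}^l B_{k,i}(n_h), \qquad \mathscr{A}_{k,i}^\sigma(\delta) = \prod_{h=1}^l A_{k,i}(n_h),
\]
by the same product-decomposition argument used in the proof of Lemma \ref{Slambda} (the $l$-fold bijection $S_\lambda \cong S_{\lambda^{(1)}} \times \cdots \times S_{\lambda^{(l)}}$).

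Finally, Gordon's Theorem (Theorem \ref{gor}) supplies $A_{k,i}(n_h) = B_{k,i}(n_h)$ for each $h$, and taking the product over $h = 1, \ldots, l$ yields $\mathscr{A}_{k,i}^\sigma(\delta) = \mathscr{B}_{k,i}^\sigma(\delta)$, as desired. I do not foresee any serious obstacle here: the entire content of the lemma is the translation of the classical combinatorial data of Gordon's theorem through the scale--primitive factorization, and both the sectional decomposition (via $\sigma$ being a refinement) and the bijection of Lemma \ref{pgnn} have already been set up in Section 2. The only point requiring a touch of care is verifying that the Gordon-type inequalities on $\lambda^{(h)}$ really do correspond, one-for-one, to the inequalities defining $B_{k,i}(n_h)$ after stripping off the common factor $\gamma_h$; but this is immediate since the order and the differences of the coefficients $b_{h,j}$ are precisely the order and differences of the parts of the image partition in $\mathbb{Z}^+$.
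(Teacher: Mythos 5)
Your proposal is correct and follows essentially the same route as the paper's own proof: decompose $\lambda$ section by section via the refinement $\sigma$, use the bijection of Lemma \ref{pgnn} to identify the sectional counts with $A_{k,i}(n_h)$ and $B_{k,i}(n_h)$, obtain the product formulas $\mathscr A_{k,i}^{\sigma}(\delta)=\prod_h A_{k,i}(n_h)$ and $\mathscr B_{k,i}^{\sigma}(\delta)=\prod_h B_{k,i}(n_h)$, and conclude by Gordon's Theorem. No gaps.
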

\begin{proof}
For each $\gamma\in\mathcal P$, we denote $\mathscr A'_{k,i,\gamma}$ the set of all partitions $\lambda\in{''{\gamma\mathbb Z^+}''}$ each of whose part is of scale not congruent to $0, i,-i$ modulo $2k+1$. Let $\mathscr B'_{k,i,\gamma}$ denote the set of partitions of the form $\lambda=(b_1\gamma,b_2\gamma,\ldots,b_m\gamma)\in{''{\gamma\mathbb Z^+}''}$ where $b_1\ge b_2\ge\ldots\ge b_m\ge1$, $b_j-b_{j+k-1}\ge2$ for all $j$ and $b_{m-i+1}\ge2$. For all $n\in\mathbb Z^+$ and $\gamma\in\mathcal P$, by Lemma \ref{pgnn}, we have $A_{k,i}(n)=p(\mathscr A_{k,i,1}',n)=p(\mathscr A_{k,i,\gamma}',n\gamma)$ and $B_{k,i}(n)=p(\mathscr B_{k,i,1}',n)=p(\mathscr B_{k,i,\gamma}',n\gamma)$. Here $A_{k,i}(n)$ and $B_{k,i}(n)$ are the same as those in Theorem \ref{gor}. Note that $\mathscr B^{\sigma}_{k,i}(\delta)$ is the number of partitions $\lambda=\lambda^{(1)}\oplus\lambda^{(2)}\oplus\cdots\oplus\lambda^{(l)}$ of $\delta$ such that $\lambda^{(h)}\in\mathscr B'_{k,i,\gamma_h}$ and $\lambda^{(h)}\vdash n_h\gamma_h$ for all $h$. Since the number of $\lambda^{(h)}$ satisfying these conditions is $p(\mathscr B'_{k,i,\gamma_h},n_h\gamma_h)$ for all $h$, we have $\mathscr B^{\sigma}_{k,i}(\delta)=\prod_{h=1}^l p(\mathscr B'_{k,i,\gamma_h},n_h\gamma_h)$. Also $\mathscr A^{\sigma}_{k,i}(\delta)$ is the number of partitions $\lambda$ of $\delta$ such that $\sigma$ is a refinement of $\lambda$ and each part of $\lambda$ is not of scale congruent to $0, i, -i$ modulo $2k+1$. Thus $\lambda=\lambda^{(1)}\oplus\lambda^{(2)}\oplus\cdots\oplus\lambda^{(l)}$ such that $\lambda^{(h)}\in''{\gamma_h\mathbb Z^+}''$ for all $h$. We can see that $\lambda^{(h)}\vdash n_h\gamma_h$ and each part of $\lambda^{(h)}$ is also of scale not congruent to $0, i, -i$ modulo $2k+1$ for all $h$. Thus the number of such $\lambda^{(h)}$ is $p(\mathscr A'_{k,i,\gamma_h},n_h\gamma_h)$ for all $h$, and hence $\mathscr A^{\sigma}_{k,i}(\delta)=\prod_{h=1}^l p(\mathscr A'_{k,i,\gamma_h},n_h\gamma_h)$. By Theorem \ref{gor} and Corollary \ref{psp}, we have 
\begin{align*}
\mathscr A_{k,i}^{\sigma}(\delta)&=\prod_{h=1}^lp(\mathscr A_{k,i,\gamma_h}',n_h\gamma_h)=\prod_{h=1}^lA_{k,i}(n_h)\\
&=\prod_{h=1}^lB_{k,i}(n_h)=\prod_{h=1}^lp(\mathscr B_{k,i,\gamma_h}',n_h\gamma_h)=\mathscr B_{k,i}^{\sigma}(\delta).
\end{align*}
\end{proof}

We can see that $\mathscr B_{k,i}^{\sigma}(\delta)$ is the number of partitions $\lambda=\lambda^{(1)}\oplus\lambda^{(2)}\oplus\cdots\oplus\lambda^{(l)}$ of $\delta$ satisfying the following three conditions for all $h$.
\begin{enumerate}
\item[1.] $\lambda^{(h)}$ is a $\gamma_h$-sectional partition of $n_h\gamma_h$, 
\item[2.] for each part $\lambda_{h,j}$ of $\lambda^{(h)}$ there are at most $k-1$ parts of $\lambda^{(h)}$ whose scale is $s(\lambda_{h,j})$ or $s(\lambda_{h,j})-1$, 
\item[3.] there are at most $i-1$ primitive parts of $\lambda^{(h)}$. 
\end{enumerate}

Let $\mathscr A_{k,i}'$ be the set of all partitions whose each part is of scale not congruent to $0, i,-i$ modulo $2k+1$. Let $\mathscr B_{k,i}'$ be the set of all partitions $\lambda^{(1)}\oplus\lambda^{(2)}\oplus\cdots\oplus\lambda^{(l)}$ such that there are pairwise distinct $\gamma_1,\gamma_2,\ldots,\gamma_l$ such that $\lambda^{(h)}\in\mathscr B_{k,i,\gamma_h}'$ for all $h$. Note that both $\mathscr A_{k,i}'$ and $\mathscr B_{k,i}'$ are closed under $\oplus$ for disjoint sectional partitions.

\begin{thm}\label{mathscrAB}
For all $\delta\in\mathcal O^+$, we have $p(\mathscr A_{k,i}',\delta)=p(\mathscr B_{k,i}',\delta)$.
\end{thm}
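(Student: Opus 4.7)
The plan is to reduce the theorem to Gordon's classical identity by slicing both sides according to primitive partitions and applying Corollary \ref{psp}. The structure is exactly parallel to the argument in Theorem \ref{pA=pB}, but simpler, since here there is no additional summation over how $k$ splits as $k_1+\cdots+k_l$.

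First, I would verify that both $\mathscr A'_{k,i}$ and $\mathscr B'_{k,i}$ are closed under $\oplus$ for disjoint sectional partitions (this is already stated just before the theorem and is immediate from the definitions: the scale condition for $\mathscr A'_{k,i}$ is a condition on each individual part, and the defining $\oplus$-decomposition of elements of $\mathscr B'_{k,i}$ extends when combined with a partition lying in disjoint sections). Applying Corollary \ref{psp} to each, I obtain
\begin{align*}
p(\mathscr A'_{k,i},\delta) &= \sum_{\mu\in P_{\delta}}\prod_{h=1}^{l} p\bigl(\mathscr A'_{k,i}\cap ''{\gamma_h\mathbb Z^+}'',\, n_h\gamma_h\bigr),\\
p(\mathscr B'_{k,i},\delta) &= \sum_{\mu\in P_{\delta}}\prod_{h=1}^{l} p\bigl(\mathscr B'_{k,i}\cap ''{\gamma_h\mathbb Z^+}'',\, n_h\gamma_h\bigr),
\end{align*}
with $\mu=(\gamma_1^{n_1}\gamma_2^{n_2}\cdots\gamma_l^{n_l})$ in both formulas.

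Next I would identify the slices. Directly from the definition, $\mathscr A'_{k,i}\cap ''{\gamma_h\mathbb Z^+}''=\mathscr A'_{k,i,\gamma_h}$, the set introduced in the proof of Lemma \ref{sigma}. For the $B$-side one must argue that a partition lying simultaneously in $\mathscr B'_{k,i}$ and in $''{\gamma_h\mathbb Z^+}''$ has its distinguished decomposition $\lambda^{(1)}\oplus\cdots\oplus\lambda^{(l')}$ collapse to a single block: since Lemma \ref{ug} shows distinct primitive elements have disjoint sections, only the block with $\gamma_i = \gamma_h$ can contribute parts, hence $\mathscr B'_{k,i}\cap ''{\gamma_h\mathbb Z^+}''=\mathscr B'_{k,i,\gamma_h}$.

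Finally, by Lemma \ref{pgnn} applied to the natural bijection $\lambda \mapsto \gamma_h\lambda$ between partitions of $n_h$ and $\gamma_h$-sectional partitions of $n_h\gamma_h$, one has $p(\mathscr A'_{k,i,\gamma_h},n_h\gamma_h)=A_{k,i}(n_h)$ and $p(\mathscr B'_{k,i,\gamma_h},n_h\gamma_h)=B_{k,i}(n_h)$, exactly as computed in the proof of Lemma \ref{sigma}. Gordon's Theorem \ref{gor} then gives $A_{k,i}(n_h)=B_{k,i}(n_h)$, so the two sums match term by term. The only subtlety is the identification of $\mathscr B'_{k,i}\cap ''{\gamma_h\mathbb Z^+}''$, but this is forced by the disjointness of sections in Lemma \ref{ug}; everything else is bookkeeping via Corollary \ref{psp}.
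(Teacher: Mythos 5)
Your proposal is correct and follows essentially the same route as the paper: slice by primitive partitions via Corollary \ref{psp}, identify $\mathscr A'_{k,i}\cap ''{\gamma_h\mathbb Z^+}''$ and $\mathscr B'_{k,i}\cap ''{\gamma_h\mathbb Z^+}''$ with $\mathscr A'_{k,i,\gamma_h}$ and $\mathscr B'_{k,i,\gamma_h}$, and match the factors termwise using Lemma \ref{pgnn} and Gordon's Theorem \ref{gor}. The only cosmetic difference is that the paper routes the termwise comparison through Lemma \ref{sigma} (which packages $\prod_h p(\mathscr A'_{k,i,\gamma_h},n_h\gamma_h)=\prod_h p(\mathscr B'_{k,i,\gamma_h},n_h\gamma_h)$ as $\mathscr A^{\sigma}_{k,i}(\delta)=\mathscr B^{\sigma}_{k,i}(\delta)$), whereas you inline that computation.
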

\begin{proof}
By Corollary \ref{psp} and Lemma \ref{sigma}, we have
\begin{align*}
p(\mathscr A_{k,i}',\delta)&=\sum_{\sigma\in P_{\delta}}\left(\prod_{h=1}^lp(\mathscr A_{k,i}'\cap{''{\gamma_h\mathbb Z^+}''},n_h\gamma_h)\right)\\
&=\sum_{\sigma\in P_{\delta}}\left(\prod_{h=1}^lp(\mathscr A_{k,i,\gamma_h}',n_h\gamma_h)\right)=\sum_{\sigma\in P_{\delta}}\mathscr A_{k,i}^{\sigma}\\
&=\sum_{\sigma\in P_{\delta}}\mathscr B_{k,i}^{\sigma}=\sum_{\sigma\in P_{\delta}}\left(\prod_{h=1}^lp(\mathscr B_{k,i,\gamma_h}',n_h\gamma_h)\right)\\
&=\sum_{\sigma\in P_{\delta}}\left(\prod_{h=1}^lp(\mathscr B_{k,i}'\cap{''{\gamma_h\mathbb Z^+}''},n_h\gamma_h)\right)=p(\mathscr B_{k,i}',\delta)
\end{align*}
where $\sigma=(\gamma_1^{n_1}\gamma_2^{n_2}\ldots\gamma_l^{n_l})\in P_{\delta}$. 
\end{proof}

The following theorem is a formal $q$-sum version of Theorem \ref{mathscrAB}. 

\begin{thm}\label{rrgf}
For $k\ge2$ and $1\le i\le k$, 
\begin{align*}
&\prod_{\gamma\in\mathcal P}\left(\sum_{n_1,n_2,\ldots,n_{k-1}\ge0}\frac{q^{(N_1^2+N_2^2+\cdots+N_{k-1}^2+N_i+N_{i+1}+\cdots+N_{k-1})\gamma}}{(q^{\gamma})_{n_1}(q^{\gamma})_{n_2}\cdots(q^{\gamma})_{n_{k-1}}}\right)\\
=&\prod_{\gamma\in\mathcal  P}\left(\prod_{n\not\equiv0,\pm i\pmod{2k+1}}\frac1{1-q^{n\gamma}}\right)=\prod_{\substack{\delta\in\mathcal O^+\\s(\delta)\not\equiv0,\pm i\pmod{2k+1}}}\frac1{1-q^{\delta}}
\end{align*}
where $N_j=n_j+n_{j+1}+\cdots+n_{k-1}$.
\end{thm}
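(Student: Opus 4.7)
The plan is to deduce Theorem \ref{rrgf} from Gordon's identity (Theorem \ref{gor2}) by applying the substitution-and-product technique of Lemma \ref{fg1q}, and then to rearrange the resulting double product using the disjoint decomposition $\mathcal O^+=\bigsqcup_{\gamma\in\mathcal P}\gamma\mathbb Z^+$ furnished by Lemma \ref{ug}.

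For the first equality, I would start from Gordon's identity in Theorem \ref{gor2}, viewing both sides as formal power series $\sum_m c_m q^m$ in the single variable $q$ with constant term $1$ (the LHS contributes $1$ from $n_1=\cdots=n_{k-1}=0$, and the RHS is a standard infinite product). Substituting $q\mapsto q^\gamma$ for each $\gamma\in\mathcal P$ yields an equality of formal $q$-sums in $\mathbb R[[q]]_{\mathcal O^+\cup\{0\}}$. Exactly as in the proof of Lemma \ref{fg1q}, the family obtained by varying $\gamma\in\mathcal P$ satisfies conditions (C2-1) and (C2-2): the constant term of each factor equals $1$, and for every fixed $\delta\in\mathcal O^+$ the equation $\gamma m=\delta$ admits at most one solution with $\gamma\in\mathcal P$ and $m\in\mathbb Z^+$ by Lemma \ref{ug}. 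Taking the infinite product over $\gamma\in\mathcal P$ therefore preserves the equality and gives precisely the first equality claimed in Theorem \ref{rrgf}.

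For the second equality, I would invoke Lemma \ref{ug} to obtain the disjoint decomposition $\mathcal O^+=\bigsqcup_{\gamma\in\mathcal P}\gamma\mathbb Z^+$. Each $\delta\in\mathcal O^+$ has a unique expression $\delta=n\gamma$ with $n\in\mathbb Z^+$, $\gamma\in\mathcal P$, and the scale $s(\delta)=n$; hence the congruence condition $s(\delta)\not\equiv 0,\pm i\pmod{2k+1}$ translates to $n\not\equiv 0,\pm i\pmod{2k+1}$. The product $\prod_{\delta\in\mathcal O^+,\,s(\delta)\not\equiv 0,\pm i}(1-q^\delta)^{-1}$ therefore splits as $\prod_{\gamma\in\mathcal P}\prod_{n\not\equiv 0,\pm i\pmod{2k+1}}(1-q^{n\gamma})^{-1}$, and both iterations are well-defined formal $q$-sums since each coefficient of $q^\delta$ only receives contributions from a single $\gamma$-section.

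The main obstacle is mildly technical rather than conceptual: Lemma \ref{fg1q} is written for a single-index sum on the left-hand side, whereas Gordon's identity carries a $(k-1)$-fold sum. However, inspection of the proof of Lemma \ref{fg1q} shows that it uses nothing beyond the equality of the two sides as a single-variable formal power series $\sum_m c_m q_1^m$ with $c_0=1$. Regrouping the multi-index sum in Gordon's identity by the total degree in $q$ reduces it to exactly that form, so the same argument applies verbatim and no substantive extension of Lemma \ref{fg1q} is required.
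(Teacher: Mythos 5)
Your proposal is correct and follows essentially the same route as the paper: the first equality is Lemma \ref{fg1q} applied to Gordon's identity (Theorem \ref{gor2}), and the second follows from the disjoint decomposition $\mathcal O^+=\bigcup_{\gamma\in\mathcal P}\gamma\mathbb Z^+$ guaranteed by Lemma \ref{ug}. Your explicit remark that the multi-index sum must first be regrouped into a single-variable power series $\sum_m c_m q^m$ with $c_0=1$ before the argument of Lemma \ref{fg1q} applies is a point the paper passes over silently, and it is handled correctly.
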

\begin{proof}
By Lemma \ref{fg1q} and Theorem \ref{gor2}, we have
\begin{align*}
&\prod_{\gamma\in\mathcal  P}\left(\sum_{n_1,n_2,\ldots,n_{k-1}\ge0}\frac{q^{(N_1^2+N_2^2+\cdots+N_{k-1}^2+N_i+N_{i+1}+\cdots+N_{k-1})\gamma}}{(q^{\gamma})_{n_1}(q^{\gamma})_{n_2}\cdots(q^{\gamma})_{n_{k-1}}}\right)\\
=&\prod_{\gamma\in\mathcal  P}\left(\prod_{n\not\equiv0,\pm i\pmod{2k+1}}\frac1{1-q^{n\gamma}}\right).
\end{align*}
Since 
\begin{align*}
&\{\delta|\delta\in\mathcal O^+~{\rm and}~s(\delta)\not\equiv0,\pm i\pmod{2k+1}\}\\
=&\{n\gamma|\gamma\in\mathcal P~{\rm and}~n\not\equiv0,\pm i\pmod{2k+1}\},
\end{align*}
we have 
\begin{align*}
\prod_{\gamma\in\mathcal P}\left(\prod_{n\not\equiv0,\pm i\pmod{2k+1}}\frac1{1-q^{n\gamma}}\right)=\prod_{\substack{\delta\in\mathcal O^+\\s(\delta)\not\equiv0,\pm i\pmod{2k+1}}}\frac1{1-q^{\delta}}.
\end{align*}
\end{proof}

From Theorems \ref{mathscrAB} and \ref{rrgf}, we have 
\begin{align*}
&\sum_{\delta\in\mathcal O^+}p(\mathscr B_{k,i}',\delta)q^{\delta}=\sum_{\delta\in\mathcal O^+}p(\mathscr A_{k,i}',\delta)q^{\delta}=\prod_{\substack{\delta\in\mathcal O^+\\s(\delta)\not\equiv0,\pm i\pmod{2k+1}}}\frac1{1-q^{\delta}}\\
=&\prod_{\gamma\in\mathcal  P}\left(\sum_{n_1,n_2,\ldots,n_{k-1}\ge0}\frac{q^{(N_1^2+N_2^2+\cdots+N_{k-1}^2+N_i+N_{i+1}+\cdots+N_{k-1})\gamma}}{(q^{\gamma})_{n_1}(q^{\gamma})_{n_2}\cdots(q^{\gamma})_{n_{k-1}}}\right)
\end{align*}
where $N_j=n_j+n_{j+1}+\cdots+n_{k-1}$.

Theorem \ref{mathscrAB} is also equivalent to the following theorem, which gives an intuitive interpretation.

\begin{thm}\label{gortrnf}
Suppose $k\ge2$ and $1\le i\le k$. Then for all $\delta\in\mathcal O^+$ the partition $\lambda$ of $\delta$ such that for each part $\lambda_i$ of $\lambda$ there are at most $k-1$ parts $\lambda_j$ such that $t(\lambda_j)=t(\lambda_i)$ and $s(\lambda_i)-s(\lambda_j)$ is $0$ or $1$, and for each primitive element $\gamma$ there are at most $i-1$ parts $\lambda_j$ such that $\lambda_j=\gamma$, is equinumerous with the partitions all of whose parts are of scale not congruent to $0,i,-i$ modulo $2k+1$.
\end{thm}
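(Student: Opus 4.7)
The plan is to prove Theorem~\ref{gortrnf} by recognizing that it is merely a rephrasing of Theorem~\ref{mathscrAB}. I would identify the two classes of partitions described in the statement with $\{\lambda\in\mathscr A_{k,i}'\colon\lambda\vdash\delta\}$ and $\{\lambda\in\mathscr B_{k,i}'\colon\lambda\vdash\delta\}$ respectively, and then invoke Theorem~\ref{mathscrAB}. The first identification is immediate from the definition of $\mathscr A_{k,i}'$: the partitions of $\delta$ all of whose parts have scale not congruent to $0,\pm i\pmod{2k+1}$ are exactly $\{\lambda\in\mathscr A_{k,i}'\colon\lambda\vdash\delta\}$, so their number is $p(\mathscr A_{k,i}',\delta)$.

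The core of the argument lies in the second identification. I would start from an arbitrary $\lambda\vdash\delta$ and take its sectional decomposition $\lambda=\lambda^{(1)}\oplus\cdots\oplus\lambda^{(l)}$, where each $\lambda^{(h)}$ is $\gamma_h$-sectional and $\gamma_1,\ldots,\gamma_l$ are pairwise distinct primitive elements. Both conditions of Theorem~\ref{gortrnf} are local to sections: the first only involves parts $\lambda_j$ with $t(\lambda_j)=t(\lambda_i)$, while the second only involves parts equal to a fixed primitive $\gamma$, which automatically fixes the section. Consequently they hold for $\lambda$ if and only if they hold within each $\lambda^{(h)}$ separately. Writing $\lambda^{(h)}=(b_{h,1}\gamma_h,\ldots,b_{h,m_h}\gamma_h)$ with $b_{h,1}\ge\cdots\ge b_{h,m_h}\ge 1$, it then suffices to check that the two local conditions are equivalent to $b_{h,j}-b_{h,j+k-1}\ge 2$ for all admissible $j$ together with $b_{h,m_h-i+1}\ge 2$, i.e.\ to $\lambda^{(h)}\in\mathscr B_{k,i,\gamma_h}'$.

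The second local equivalence is immediate: ``at most $i-1$ parts equal to $\gamma_h$'' is ``at most $i-1$ of the $b_{h,j}$'s equal $1$'', which, since the sequence is weakly decreasing, is precisely $b_{h,m_h-i+1}\ge 2$. For the first local equivalence I would invoke the standard observation (already recorded in the remark following Theorem~\ref{gor}) that, for a weakly decreasing sequence of positive integers, the inequality $b_{h,j}-b_{h,j+k-1}\ge 2$ for all $j$ is equivalent to the multiplicity bound: for every $c\ge 1$, at most $k-1$ of the $b_{h,j}$'s lie in $\{c,c-1\}$. A short case split on whether the value $c$ is actually attained as some $s(\lambda_i)$ then shows this in turn is exactly the condition of Theorem~\ref{gortrnf}. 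Once these identifications are in place, $\lambda$ satisfies both conditions of Theorem~\ref{gortrnf} precisely when $\lambda\in\mathscr B_{k,i}'$, and the equinumeration follows at once from Theorem~\ref{mathscrAB}. The only mildly delicate step is the gap-versus-multiplicity translation for the first condition, but since this is exactly the reformulation already used in the classical Gordon theorem over $\mathbb Q$, no genuinely new obstacle arises in the number-field setting.
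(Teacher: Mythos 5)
Your proposal is correct and follows exactly the route the paper intends: the paper offers no separate proof of Theorem~\ref{gortrnf}, merely asserting that it is equivalent to Theorem~\ref{mathscrAB}, and your argument supplies precisely that equivalence. Your sectional decomposition, the translation of the multiplicity bound into the gap condition $b_{h,j}-b_{h,j+k-1}\ge2$ (already recorded in the paper's remark after Theorem~\ref{gor}), and the identification of the second condition with $b_{h,m_h-i+1}\ge2$ are all sound, so this is the same approach with the details the paper leaves implicit filled in correctly.
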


The Rogers-Ramanujan Identities over a totally real number field $K$ is obtained by Theorem \ref{gortrnf} as special cases. The first identity is the case $k=i=2$, and the second one is the case $k=2$ and $i=1$.

\begin{cor}[The First Rogers-Ramanujan Identity]
The partitions of $\delta\in\mathcal O^+$ in which the scale of each part is congruent to $1$ or $4$ modulo $5$ are equinumerous with the partitions of $\delta$ in which the difference between the scales of any two parts in the same section is at least $2$.
\end{cor}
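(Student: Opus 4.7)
The plan is to derive this corollary directly from Theorem \ref{gortrnf} by specializing to $k=i=2$, since that theorem is designed precisely to package Rogers-Ramanujan-type congruence/gap dichotomies over a general totally real base. Two things need to be verified: that the modular residue condition on scales specializes to ``congruent to $1$ or $4$ modulo $5$'', and that the two restrictions on the other side collapse to the single ``scales in the same section differ by at least $2$'' condition.

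For the modular side, with $k=i=2$ one has $2k+1=5$ and $\{0,i,-i\}\equiv\{0,2,3\}\pmod 5$, so the scale of every part failing to be congruent to $0,\pm i\pmod{2k+1}$ is exactly the condition that every scale is congruent to $1$ or $4$ modulo $5$. This matches one class of partitions in the corollary verbatim.

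For the other side, the key observation is that in Theorem \ref{gortrnf} the count of parts $\lambda_j$ satisfying $t(\lambda_j)=t(\lambda_i)$ and $s(\lambda_i)-s(\lambda_j)\in\{0,1\}$ is a count including $\lambda_i$ itself. With $k-1=1$ the bound forces $\lambda_i$ to be the unique such $\lambda_j$, so no distinct part in the same section has scale equal to $s(\lambda_i)$ or to $s(\lambda_i)-1$. Applying this symmetrically over all parts, any two distinct parts in the same section have scales differing by at least $2$. The secondary condition of Theorem \ref{gortrnf}, ``at most $i-1=1$ parts equal the primitive element $\gamma$'' for each $\gamma\in\mathcal P$, is then automatic, since two parts both equal to $\gamma$ would violate the gap-$2$ condition just established. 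Conversely, the gap-$2$ condition immediately implies both conditions of Theorem \ref{gortrnf} at $k=i=2$.

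The only subtlety I anticipate is carefully matching the indexing convention in the bound ``at most $k-1$ parts $\lambda_j$'' of Theorem \ref{gortrnf}---in particular, that $j=i$ is counted---since this is what turns the specialization $k=2$ into a strict scale gap of $2$ rather than $1$. Once this bookkeeping is pinned down, the corollary is just a specialization of Theorem \ref{gortrnf}, with no further genuine obstacle.
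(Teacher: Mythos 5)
Your proposal is correct and matches the paper's own route: the paper obtains this corollary precisely as the specialization $k=i=2$ of Theorem \ref{gortrnf}. Your careful bookkeeping---that the count of parts $\lambda_j$ with $t(\lambda_j)=t(\lambda_i)$ and $s(\lambda_i)-s(\lambda_j)\in\{0,1\}$ includes $\lambda_i$ itself, and that the ``at most $i-1$ primitive parts'' condition becomes redundant at $i=2$---is exactly the verification the paper leaves implicit.
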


\begin{cor}[The Second Rogers-Ramanujan Identity]
The partitions of $\delta\in\mathcal O^+$ in which the scale of each part is congruent to $2$ or $3$ modulo $5$ are equinumerous with the partitions of $\delta$ in which there is no primitive part and the difference between scales of any two parts in the same section is at least $2$.
\end{cor}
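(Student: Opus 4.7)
The plan is to derive this corollary as the direct specialization of Theorem \ref{gortrnf} to the parameters $k = 2$ and $i = 1$, parallel to how the First Rogers--Ramanujan Identity is obtained from the case $k = i = 2$.

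First I would handle the arithmetic side. With $2k + 1 = 5$ and $i = 1$, the residues $0, \pm i \pmod{2k+1}$ are $0, 1, 4 \pmod 5$; consequently the condition ``scale not congruent to $0, i, -i$ modulo $2k+1$'' on a part is equivalent to ``scale congruent to $2$ or $3$ modulo $5$'', which matches the first family in the statement.

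Next I would translate the combinatorial side. The condition in Theorem \ref{gortrnf} asserting that for each part $\mu$ of $\lambda$ there are at most $k - 1 = 1$ parts $\nu$ of $\lambda$ with $t(\nu) = t(\mu)$ and $s(\mu) - s(\nu) \in \{0, 1\}$ is already saturated by the trivial choice $\nu = \mu$ itself, so no other part in the same section may have scale equal to or one less than $s(\mu)$; equivalently, any two distinct parts lying in a common section have scales differing by at least $2$. Setting $i = 1$ turns the secondary bound ``at most $i - 1 = 0$ parts equal to $\gamma$'' for each primitive $\gamma$ into a flat prohibition against primitive parts, yielding the remaining clause of the second family.

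Since both descriptions of partitions agree with the $(k,i) = (2,1)$ instance of Theorem \ref{gortrnf}, the equinumerosity follows at once. I expect no serious obstacle; the one point deserving care is the observation that $\mu$ itself is counted among the candidates $\nu$, so the bound $k - 1 = 1$ genuinely enforces a scale gap of at least $2$ between distinct parts within a section, rather than allowing a single coincident pair.
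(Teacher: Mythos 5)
Your proposal is correct and follows exactly the paper's route: the corollary is read off as the $(k,i)=(2,1)$ specialization of Theorem \ref{gortrnf}, with $0,\pm1\pmod 5$ excluded on the arithmetic side and the bounds $k-1=1$ and $i-1=0$ translating into the scale-gap and no-primitive-part conditions. Your remark that the part itself is counted among the candidates, so the bound of $1$ forces distinct parts in a section to differ in scale by at least $2$, is precisely the right point of care.
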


The following corollary is also consequences of Theorem \ref{rrgf} for $k=i=2$ and $k=2$, $i=1$.

\begin{cor}We have
\begin{align*}
\prod_{\gamma\in\mathcal P}\left(\sum_{n=0}^{\infty}\frac{q^{n^2\gamma}}{(q^{\gamma})_n}\right)&=\prod_{\gamma\in\mathcal P}\left(\prod_{n=0}^{\infty}\frac1{(1-q^{(5n+1)\gamma})(1-q^{(5n+4)\gamma})}\right)\\
&=\prod_{\substack{\delta\in\mathcal O^+\\s(\delta)\equiv1,4\pmod5}}\frac1{1-q^{\delta}}
\end{align*}
and
\begin{align*}
\prod_{\gamma\in\mathcal P}\left(\sum_{n=0}^{\infty}\frac{q^{n(n+1)\gamma}}{(q^{\gamma})_n}\right)&=\prod_{\gamma\in\mathcal P}\left(\prod_{n=0}^{\infty}\frac1{(1-q^{(5n+2)\gamma})(1-q^{(5n+3)\gamma})}\right)\\
&=\prod_{\substack{\delta\in\mathcal O^+\\s(\delta)\equiv2,3\pmod5}}\frac1{1-q^{\delta}}.
\end{align*}
\end{cor}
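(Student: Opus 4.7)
The plan is to read off both identities as direct specializations of Theorem \ref{rrgf}: the first comes from the parameter choice $k=i=2$ and the second from $k=2,\,i=1$. Since Theorem \ref{rrgf} already identifies the sum side with the primitive product and the primitive product with the product over $\mathcal O^+$ with the scale restriction, no new combinatorics is needed; the work is simply to expand the general formula at the two relevant parameter values and to check that the congruence classes line up.

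For the first identity I would set $k=i=2$ in Theorem \ref{rrgf}. Since $k-1=1$ there is a single summation variable $n_1$ with $N_1=n_1$, and since $i=2>k-1=1$ the secondary sum $N_i+N_{i+1}+\cdots+N_{k-1}$ is empty, so the exponent collapses to $N_1^2=n_1^2$. The excluded residues modulo $2k+1=5$ are $0,\pm 2$, whose complement is $\{1,4\}$, so the surviving positive rational integers are precisely those of the form $5m+1$ and $5m+4$ for $m\ge 0$. This recasts the right-hand side of Theorem \ref{rrgf} in exactly the form displayed in the corollary, and the third equality is immediate from the partition $\mathcal O^+=\bigsqcup_{\gamma\in\mathcal P}\gamma\mathbb Z^+$ provided by Lemma \ref{ug}, together with $s(n\gamma)=n$.

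For the second identity I would set $k=2,\,i=1$. Again $n_1$ is the only index and $N_1=n_1$, but now $N_i+\cdots+N_{k-1}=N_1=n_1$, so the exponent is $n_1^2+n_1=n_1(n_1+1)$, yielding $\prod_{\gamma\in\mathcal P}\sum_{n=0}^{\infty}q^{n(n+1)\gamma}/(q^{\gamma})_n$. The excluded residues modulo $5$ are now $0,\pm 1$, whose complement is $\{2,3\}$, giving the arithmetic progressions $5m+2$ and $5m+3$. Collapsing the double indexing $(n,\gamma)\mapsto n\gamma$ via Lemma \ref{ug} then produces the third product, and the corollary follows.

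There is essentially no obstacle: the combinatorial content was absorbed into Theorem \ref{mathscrAB} and its generating-function counterpart Theorem \ref{rrgf}. The only points requiring mild care are the degenerate convention that the sum $N_i+\cdots+N_{k-1}$ is empty when $i=k$ (so that one genuinely recovers $n^2$ rather than $n^2+n$ for the first identity), and the bookkeeping that $s(n\gamma)=n$ for $\gamma\in\mathcal P$, which makes the reindexing $\{n\gamma:n\not\equiv 0,\pm i\!\!\pmod{2k+1},\,\gamma\in\mathcal P\}=\{\delta\in\mathcal O^+:s(\delta)\not\equiv 0,\pm i\!\!\pmod{2k+1}\}$ a clerical check rather than a substantive step.
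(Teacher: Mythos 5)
Your proposal is correct and follows exactly the paper's route: the paper derives this corollary precisely as the specializations $k=i=2$ and $k=2$, $i=1$ of Theorem \ref{rrgf}, with the residue classes $\{1,4\}$ and $\{2,3\}$ modulo $5$ and the exponents $n^2$ and $n(n+1)$ arising just as you compute, and the final reindexing over $\delta\in\mathcal O^+$ already built into that theorem's statement.
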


\subsection{Another Version of the Generalized Rogers-Ramanujan Identities}

Let $K$ be a totally real number field with extension degree $n$. Suppose that there is a prime ideal $\mathfrak p$ of $\mathcal O=\mathcal O_K$ such that $5\in\mathfrak p$ and the residue class field $\mathcal O/\mathfrak p$ is isomorphic to $\mathbb Z/5\mathbb Z$. Then every $\delta\in\mathcal O$ is congruent to $i$ modulo $\mathfrak p$ for some $i\in\{0,1,2,3,4\}$, i.e., there is $0\le i\le4$ such that $\delta-i\in\mathfrak p$. Note that $\delta\equiv1,4\pmod{\mathfrak p}$ if and only if either $t(\delta)\equiv1,4\pmod{\mathfrak p}$ and $s(\delta)\equiv1,4\pmod5$, or $t(\delta)\equiv2,3\pmod{\mathfrak p}$ and $s(\delta)\equiv2,3\pmod5$ where $t(\delta)$ and $s(\delta)$ are the primitive factor and the scale of $\delta$ respectively. Also $\delta\equiv2,3\pmod{\mathfrak p}$ if and only if either $t(\delta)\equiv1,4\pmod{\mathfrak p}$ and $s(\delta)\equiv2,3\pmod5$, or $t(\delta)\equiv2,3\pmod{\mathfrak p}$ and $s(\delta)\equiv1,4\pmod5$. Let $\lambda=(\alpha_1^{n_1}\alpha_2^{n_2}\cdots\alpha_k^{n_k}\beta_1^{m_1}\beta_2^{m_2}\cdots\beta_l^{m_l})$ be a partition of $\delta\in\mathcal O^+$ with $t(\alpha_i)\equiv1,4\pmod{\mathfrak p}$ and $t(\beta_j)\equiv2,3\pmod{\mathfrak p}$. Then $\alpha_i\equiv\beta_j\equiv1,4\pmod{\mathfrak p}$ if and only if $s(\alpha_i)\equiv1,4\pmod5$ and $s(\beta_j)\equiv2,3\pmod5$. Also $\alpha_i\equiv\beta_j\equiv2,3\pmod{\mathfrak p}$ if and only if $s(\alpha_i)\equiv2,3\pmod5$ and $s(\beta_j)\equiv1,4\pmod5$. By Lemma \ref{fg1q}, for all $\mathcal P'\subset\mathcal P$, we have
\begin{align*}
\prod_{\gamma\in\mathcal P'}\left(\prod_{n=0}^{\infty}\frac1{(1-q^{(5n+1)\gamma})(1-q^{(5n+4)\gamma})}\right)=\prod_{\gamma\in\mathcal P'}\left(\sum_{n=0}^{\infty}\frac{q^{n^2\gamma}}{(q^{\gamma})_n}\right)
\end{align*}
and
\begin{align*}
\prod_{\gamma\in\mathcal P'}\left(\prod_{n=0}^{\infty}\frac1{(1-q^{(5n+2)\gamma})(1-q^{(5n+3)\gamma})}\right)=\prod_{\gamma\in\mathcal P'}\left(\sum_{n=0}^{\infty}\frac{q^{n(n+1)\gamma}}{(q^{\gamma})_n}\right),
\end{align*}
and hence the following two corollaries follow.

\begin{cor}
Let $X=\{\gamma\in\mathcal P|\gamma\equiv1,4\pmod{\mathfrak p}\}$ and $Y=\{\gamma\in\mathcal P|\gamma\equiv2,3\pmod{\mathfrak p}\}$. We have
\begin{align*}
\prod_{\substack{\delta\in\mathcal O^+\\\delta\equiv1,4\pmod{\mathfrak p}}}\frac1{1-q^{\delta}}=&\prod_{\gamma\in X}\left(\prod_{n=0}^{\infty}\frac1{(1-q^{(5n+1)\gamma})(1-q^{(5n+4)\gamma})}\right)\\
\times&\prod_{\gamma\in Y}\left(\prod_{n=0}^{\infty}\frac1{(1-q^{(5n+2)\gamma})(1-q^{(5n+3)\gamma})}\right)\\
=&\prod_{\gamma\in X}\left(\sum_{n=0}^{\infty}\frac{q^{n^2\gamma}}{(q^{\gamma})_n}\right)\prod_{\gamma\in Y}\left(\sum_{n=0}^{\infty}\frac{q^{n(n+1)\gamma}}{(q^{\gamma})_n}\right)
\end{align*}
and
\begin{align*}
\prod_{\substack{\delta\in\mathcal O^+\\\delta\equiv2,3\pmod{\mathfrak p}}}\frac1{1-q^{\delta}}=&\prod_{\gamma\in X}\left(\prod_{n=0}^{\infty}\frac1{(1-q^{(5n+2)\gamma})(1-q^{(5n+3)\gamma})}\right)\\
\times&\prod_{\gamma\in Y}\left(\prod_{n=0}^{\infty}\frac1{(1-q^{(5n+1)\gamma})(1-q^{(5n+4)\gamma})}\right)\\
=&\prod_{\gamma\in X}\left(\sum_{n=0}^{\infty}\frac{q^{n(n+1)\gamma}}{(q^{\gamma})_n}\right)\prod_{\gamma\in Y}\left(\sum_{n=0}^{\infty}\frac{q^{n^2\gamma}}{(q^{\gamma})_n}\right).
\end{align*}
\end{cor}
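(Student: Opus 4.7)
The plan is to combine the sectional decomposition $\mathcal O^+ = \bigsqcup_{\gamma\in\mathcal P}\gamma\mathbb Z^+$ (Lemma \ref{ug}) with the residue-class identification of $\delta\bmod\mathfrak p$ in terms of the pair $(t(\delta)\bmod\mathfrak p,\,s(\delta)\bmod 5)$ that is spelled out in the paragraph preceding the statement, and then to apply Lemma \ref{fg1q} to the two Rogers--Ramanujan identities in the preceding corollary.

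First I would rewrite the set of summation indices. Since every $\delta\in\mathcal O^+$ is uniquely of the form $n\gamma$ with $\gamma\in\mathcal P$ and $n\in\mathbb Z^+$, and since $n\gamma\equiv 1,4\pmod{\mathfrak p}$ happens precisely in the two disjoint cases ``$\gamma\in X$ and $n\equiv 1,4\pmod 5$'' or ``$\gamma\in Y$ and $n\equiv 2,3\pmod 5$'' (and analogously for residues $2,3$), Lemma \ref{ug} gives
\begin{align*}
\prod_{\substack{\delta\in\mathcal O^+\\ \delta\equiv 1,4\pmod{\mathfrak p}}}\frac{1}{1-q^{\delta}}
=\prod_{\gamma\in X}\prod_{\substack{n\ge1\\ n\equiv1,4\pmod 5}}\frac{1}{1-q^{n\gamma}}\;\times\;\prod_{\gamma\in Y}\prod_{\substack{n\ge1\\ n\equiv2,3\pmod5}}\frac{1}{1-q^{n\gamma}},
\end{align*}
and the same bookkeeping gives the analogous factorization for residues $2,3$. (Primitive elements lying in $\mathfrak p$, if any, do not contribute to either side, so they can be ignored.) This handles the first equality of each of the two displays in the corollary.

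For the second equality, I would apply Lemma \ref{fg1q} separately to the subsets $\mathcal P'=X$ and $\mathcal P'=Y$, using as input the two classical Rogers--Ramanujan identities
\[\sum_{n=0}^{\infty}\frac{q^{n^2}}{(q)_n}=\prod_{n=0}^{\infty}\frac{1}{(1-q^{5n+1})(1-q^{5n+4})},\qquad \sum_{n=0}^{\infty}\frac{q^{n(n+1)}}{(q)_n}=\prod_{n=0}^{\infty}\frac{1}{(1-q^{5n+2})(1-q^{5n+3})},\]
which are exactly in the form required by Lemma \ref{fg1q} (indeed the previous corollary already invoked this lemma for $\mathcal P'=\mathcal P$). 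Substituting $q\mapsto q^{\gamma}$ over $\gamma\in X$ in one identity and over $\gamma\in Y$ in the other, and then multiplying the two resulting products together, gives the stated evaluation of each residue-class product in terms of sums over $n\ge 0$.

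The only real friction point is the accounting step in the first paragraph, namely making sure that the double disjoint union matches the four ``product'' factors on the right, and that each $n\gamma$ appears with the correct multiplicity (exactly once, by Lemma \ref{ug}); everything else is a direct specialization of tools already developed earlier in the paper. The residues $2,3$ case is a verbatim copy with the roles of $X$ and $Y$ (equivalently of the two Rogers--Ramanujan identities) interchanged.
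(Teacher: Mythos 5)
Your proposal is correct and follows essentially the same route as the paper: the paper likewise splits $\{\delta\in\mathcal O^+:\delta\equiv1,4\pmod{\mathfrak p}\}$ via the radial decomposition and the observation that $\delta\equiv1,4\pmod{\mathfrak p}$ iff ($t(\delta)\in X$ and $s(\delta)\equiv1,4\pmod5$) or ($t(\delta)\in Y$ and $s(\delta)\equiv2,3\pmod5$), and then invokes Lemma \ref{fg1q} with $\mathcal P'=X$ and $\mathcal P'=Y$ applied to the two classical Rogers--Ramanujan identities. Your extra remarks (disjointness from Lemma \ref{ug}, primitives in $\mathfrak p$ contributing to neither side) are correct and only make explicit what the paper leaves implicit.
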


\begin{cor}
1. The partitions of $\delta$ in which each part is congruent to $1$ or $4$ modulo $\mathfrak p$ is equinumerous with the partitions in which
\begin{enumerate}
\item[(i)] the primitive factor of each part is not congruent to $0$ modulo $\mathfrak p$,
\item[(ii)] the difference between scales of any two parts in the same section is at least $2$,
\item[(iii)] there is no primitive part congruent to $2$ or $3$ modulo $\mathfrak p$. 
\end{enumerate}
2. The partitions of $\delta$ in which each part is congruent to $2$ or $3$ modulo $\mathfrak p$ is equinumerous with the partitions in which
\begin{enumerate}
\item[(i)] the primitive factor of each part is not congruent to $0$ modulo $\mathfrak p$,
\item[(ii)] the difference between scales of any two parts in the same section is at least $2$,
\item[(iii)] there is no primitive part congruent to $1$ or $4$ modulo $\mathfrak p$. 
\end{enumerate}

\end{cor}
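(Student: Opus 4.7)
The plan is to match the two generating series in the preceding corollary to the two combinatorial sides of this corollary, interpreting each via Corollary~\ref{psp} together with the first and second (sectional) Rogers-Ramanujan identities already established. I will describe the argument for part~1; part~2 is identical with the roles of $X$ and $Y$ interchanged.

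First I will note that partitions of $\delta$ whose parts are all congruent to $1$ or $4$ modulo $\mathfrak p$ are enumerated, tautologically, by the coefficient of $q^\delta$ in $\prod_{\delta' \equiv 1, 4 \pmod{\mathfrak p}} (1-q^{\delta'})^{-1}$, which is the left-hand side of the preceding corollary.

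Next I will let $\mathscr C$ denote the set of all partitions satisfying (i), (ii), (iii). Each of these conditions is local to a single section, so $\mathscr C$ is closed under $\oplus$ for disjoint sectional partitions and Corollary~\ref{psp} applies. It then remains to identify $\mathscr C \cap ''{\gamma\mathbb Z^+}''$ for each primitive $\gamma$. When $\gamma \equiv 0 \pmod{\mathfrak p}$, condition (i) excludes every nonempty $\gamma$-sectional partition. When $\gamma \in X$, the primitive part $\gamma$ is congruent to $1$ or $4$, not $2$ or $3$, modulo $\mathfrak p$, so (iii) imposes no further restriction on the $\gamma$-section and only (ii) remains; by the first Rogers-Ramanujan corollary the generating series of such partitions is $\sum_{n \ge 0} q^{n^2\gamma}/(q^\gamma)_n$. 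When $\gamma \in Y$, condition (iii) forbids the primitive part $\gamma$, and combined with (ii) this matches the hypothesis of the second Rogers-Ramanujan corollary, whose generating series is $\sum_{n \ge 0} q^{n(n+1)\gamma}/(q^\gamma)_n$. Taking the product of these sectional series over $\gamma \in X$ and over $\gamma \in Y$ reproduces the right-hand side of the preceding corollary, so the coefficient of $q^\delta$ there equals $p(\mathscr C, \delta)$.

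Combining these two generating-series interpretations with the identity already proved in the preceding corollary yields the desired equinumeracy. The main obstacle is essentially bookkeeping: verifying that (iii) is automatic for $\gamma \in X$ but exactly captures ``no primitive part $\gamma$'' for $\gamma \in Y$, and that (i) suppresses every section over a primitive element divisible by $\mathfrak p$, so that the sectional product produced by Corollary~\ref{psp} matches the one appearing in the preceding corollary. Once this translation is secured the argument closes.
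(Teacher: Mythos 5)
Your argument is correct and follows essentially the route the paper intends: the paper derives this corollary directly from the generating-function identity of the preceding corollary, and your use of Corollary~\ref{psp} together with the sectional Rogers--Ramanujan identities (first for $\gamma\in X$, second for $\gamma\in Y$, empty section for $\gamma\equiv0\pmod{\mathfrak p}$) is exactly the combinatorial translation the paper leaves implicit with ``hence the following two corollaries follow.'' Your bookkeeping of why (iii) is vacuous on $X$-sections and active on $Y$-sections, and why (i) kills the $\gamma\equiv0$ sections, matches the residue analysis given just before the corollary.
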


\bibliography{aomsample}
\bibliographystyle{aomplain}

\end{document}